\newcommand{\N}{\ensuremath{\mathcal{N}}}
\newcommand{\C}{\ensuremath{\mathbb{C}}}
\newcommand{\VN}{{\rm VN}}
\renewcommand{\leq}{\ensuremath{\leqslant}}
\renewcommand{\geq}{\ensuremath{\geqslant}}
\newcommand{\n}{\noindent}
\newcommand{\qed}{\hfill \vrule height6pt  width6pt depth0pt}
\newcommand{\norm}[1]{ \| #1  \|}
\newcommand{\bnorm}[1]{ \big\| #1  \big\|}
\newcommand{\Bnorm}[1]{ \Big\| #1  \Big\|}
\newcommand{\bgnorm}[1]{ \bigg\| #1  \bigg\|}
\newcommand{\Bgnorm}[1]{ \Bigg\| #1  \Bigg\|}
\newcommand{\ot}{\otimes}
\newcommand{\ovl}{\overline}
\newcommand{\otvn}{\ovl\ot}
\newcommand{\ul}{\mathcal{U}}
\newcommand{\Rad}{{\rm Rad}}
\newcommand{\co}{\colon}
\newcommand{\OUMD}{{\rm OUMD}}
\newcommand{\UMD}{{\rm UMD}}
\newcommand{\QWEP}{{\rm QWEP}}
\newcommand{\OK}{{\rm OK}}
\newcommand{\K}{{\rm K}}
\newtheorem{thm}{Theorem}[section]
\newtheorem{defi}[thm]{Definition}
\newtheorem{prop}[thm]{Proposition}
\newtheorem{lemma}[thm]{Lemma}
\newtheorem{remark}[thm]{Remark}
\newenvironment{proof}[1][]{\noindent {\it Proof #1} : }{\hbox{~}\qed
\smallskip
}
\numberwithin{equation}{section}
\begin{document}
\selectlanguage{english}
\title{\bfseries{Analytic semigroups on vector valued noncommutative $L^p$-spaces}}
\date{}
\author{\bfseries{C\'edric Arhancet}}

\maketitle

\begin{abstract}
We give sufficient conditions on an operator space $E$ and on a
semigroup of operators on a von Neumann algebra $M$ to obtain a
bounded analytic or a $R$-analytic semigroup $(T_t \ot Id_E)_{t \geq
0}$ on the vector valued noncommutative $L^p$-space $L^p(M,E)$.
Moreover, we give applications to the $H^\infty(\Sigma_\theta)$
functional calculus of the generators of these semigroups,
generalizing some earlier work of M. Junge, C. Le Merdy and Q. Xu.
\bigskip
\end{abstract}


\makeatletter
 \renewcommand{\@makefntext}[1]{#1}
 \makeatother
 \footnotetext{
 The author is supported by the research program ANR 2011 BS01 008 01.\\
 2010 {\it Mathematics subject classification:}
 46L51, 46L07, 47D03. 
\\
{\it Key words}: noncommutative $L^p$-spaces, operator spaces,
semigroups, functional calculus, $\OUMD_p$ property.}

\section{Introduction}
It is shown in \cite{JMX} that whenever $(T_t)_{t\geq 0}$ is a
noncommutative diffusion semigroup on a von Neumann algebra $M$
equipped with a faithful normal state such that each $T_t$ satisfies
the Rota dilation property, then the negative generator of its
$L^p$-realization ($1< p < \infty$) admits a bounded
$H^\infty(\Sigma_\theta)$ functional calculus for some $0< \theta <
\frac{\pi}{2}$ where $\Sigma_{\theta} = \{ z \in \mathbb{C}^*: |\arg
z| < \theta \}$ is the open sector of angle $2\theta$ around the
positive real axis $(0,+\infty)$. Our first principal result is an
extension of this theorem to the vector valued case. We use a
different approach using $R$-analyticity instead of square
functions.

In order to describe our result, we need several definitions.
\begin{defi}\label{defmarkov}
Let $(M,\phi)$ and $(N,\psi)$ be von Neumann algebras equipped with
normal faithful states $\phi$ and $\psi$ respectively. A linear map
$T\colon M \rightarrow N$ is called a $(\phi, \psi)$-Markov map if
\begin{enumerate}
\item [$(1)$] $T$ is completely positive
\item [$(2)$] $T$ is unital
\item [$(3)$] $\psi\circ T=\phi$
\item [$(4)$] $T\circ \sigma_t^{\phi}=\sigma_t^\psi\circ T$, for all $t\in \mathbb{R}$, where $(\sigma_t^{\phi})_{t\in \mathbb{R}}$ and $(\sigma_t^{\psi})_{t\in \mathbb{R}}$ denote the automorphism groups of the states $\phi$ and $\psi$ respectively.
\end{enumerate}
In particular, when $(M,\phi)=(N,\psi)$, we say that $T$ is a
$\phi$-Markov map.
\end{defi}
A linear map $T \co M \to N$ satisfying conditions
$(1)-(3)$ above is normal. If, moreover, condition $(4)$ is
satisfied, then it is known that there exists a unique completely
positive, unital map $T^*\co N \to M$ such that
\begin{equation*}
\phi\big(T^*(y)x\big) =\psi\big(y T(x)\big), \qquad x\in M,\ y\in N.
\end{equation*}
The next definition is a variation of the one of \cite{AnD} (see
also \cite{Ric} and \cite{HaM}).
\begin{defi}\label{Def QWEP-factorisable}
A $(\phi, \psi)$-Markov map $T\colon M\rightarrow N$ is called
$\QWEP$-factorizable if there exist a von Neumann algebra $P$ with
$\QWEP$ equipped with a faithful normal state $\chi$\,, and
$*$-monomorphisms $J_0\colon N\rightarrow P$ and $J_1 \co M
\to P$ such that $J_0$ is $(\phi, \chi)$-Markov and $J_1$ is
$(\psi, \chi)$-Markov, satisfying, moreover, $T=J_0^*\circ J_1$\,.
We say that $T$ is hyper-factorizable if the same property is true
with a hyperfinite von Neumann algebra $P$.
\end{defi}

Now, we introduce the following definition (compare \cite[Definition
4.1]{HaM} and \cite[Property 4.10]{Arh})

\begin{defi}
\label{Def QWEP dilatable} Let $M$ be a von Neumann algebra equipped
with a normal faithful state $\phi$. Let $(T_t)_{t\geq 0}$ be a
$w^*$-continuous semigroup of $\phi$-Markov maps on $M$. We say that
the semigroup is $\QWEP$-dilatable if there exist a von Neumann
algebra $N$ with $\QWEP$ equipped with a normal faithful state
$\psi$, a w*-continuous group $(U_t)_{t \in \mathbb{R}}$ of
$*$-automorphisms of $N$, a $*$-monomorphism $J \co M \to N$
such that each $U_t$ is $\phi$-Markov and $J$ is $(\phi,
\psi)$-Markov satisfying
\begin{eqnarray*}
T_t=\mathbb{E}\circ U_t \circ J,\qquad t \geq 0,
\end{eqnarray*}
where $\mathbb{E}=J^* \co N \to M$ is the canonical faithful
normal conditional expectation preserving the states associated with
$J$.
\end{defi}
Let $C^*(\mathbb{F}_{\infty})$ be the full group $C^*$-algebra of
the free group $\mathbb{F}_{\infty}$. We say that an operator space
$E$ is locally-$C^*(\mathbb{F}_{\infty})$ if
$$
d_f(E) = \sup_{F \subset E, \text{finite dimensional}} \inf \big\{
d_{cb}(F,G) : G \subset C^*(\mathbb{F}_{\infty}) \big\} < \infty.
$$
This property is stable under duality and complex interpolation. All
natural examples satisfy $d_f(E)=1$ (see \cite[Chapter 21]{Pis6} and
\cite{Har} for more information on this class of operator spaces).
If $M$ be a von Neumann algebra with QWEP equipped with a normal
faithful state and if $E$ is locally-$C^*(\mathbb{F}_{\infty})$,
then the vector valued non commutative $L^p$-space $L^p(M,E)$ is
well-defined and generalize the classical construction for
hyperfinite von Neumann algebras (See section 2 for more
information).

For any index set $I$, we denote by $OH(I)$ the associated operator
Hilbert space introduced by G. Pisier, see \cite{Pis7} and
\cite{Pis6} for the details. Recall that $\OUMD_p$ is the operator
space analogue of the \textit{Unconditional Martingale Differences}
(UMD) property of Banach spaces, see section 2 for more
information. We also use a similar property $\OUMD'_p$ for QWEP von
Neumann algebras. The definition is given in section 2. Our main
result is the following theorem.

\begin{thm}\label{Th calcul fonctionnel complètement borné}
Let $M$ be a von Neumann algebra with $\QWEP$ equipped with a normal
faithful state. Let $(T_t)_{t\geq 0}$ be a $\QWEP$-dilatable
$w^*$-continuous semigroup of operators on $M$. Suppose $1< p,q
<\infty$ and $0<\alpha<1$. Let $E$ be an operator space such that
$E=\big(OH(I),F\big)_\alpha$ for some index set $I$ and for some
operator space $F$ with
$\frac{1}{p}=\frac{1-\alpha}{2}+\frac{\alpha}{q}$. Assume one of the
following assertions.
\begin{enumerate}
  \item Each $T_t$ is hyper-factorizable and $F$ is $\OUMD_{q}$.
  \item Each $T_t$ is $\QWEP$-factorizable and $F$ is $\OUMD_{q}'$.
\end{enumerate}
We let $-A_p$ be the generator of the strongly continuous semigroup
$(T_t\ot Id_{E})_{t\geq 0}$ on $L^p(M,E)$. Then for some
$0<\theta<\frac{\pi}{2}$, the operator $A_p$ has a completely
bounded $H^{\infty}(\Sigma_\theta)$ functional calculus.
\end{thm}

This result can be used with the noncommutative Poisson semigroup or
the $q$-Ornstein-Uhlenbeck semigroup and by example $E=OH(I)$ for
any index set $I$. A version of this result for semigroups of Schur
multipliers is also given.

A famous theorem of G. Pisier \cite{Pis4} says that a Banach space
$X$ is K-convex (i.e. does not contain uniformly $\ell^1_n$'s) if
and only if the vectorial Rademacher projection $P \ot Id_X$ is
bounded on the Bochner space $L^2(\Omega,X)$ where $\Omega$ is a
probability space. In this proof, he showed and used the fact that
if $X$ is K-convex then any $w^*$-continuous semigroup
$(T_t)_{t \geq 0}$ of positive unital selfadjoint Fourier multipliers
on a locally compact abelian group $G$ induces a bounded analytic
semigroup $(T_t \ot Id_X)_{t \geq 0}$ on the Bochner space $L^p(G,X)$
where $1< p <\infty$. Moreover, he showed a similar result for
general $w^*$-continuous semigroups of positive unital contractions
on a measure space if $X$ does not contain, for some $\lambda>1$,
any subspace $\lambda$-isomorphic to $\ell^1_2$. We give noncommutative analogues of these results. There are crucial steps in the proof of our Theorem \ref{Th calcul fonctionnel complètement
borné}.

We say that an operator space $E$ is OK-convex if the vector
valued Schatten space $S^2(E)$ is K-convex. This notion was
introduced in \cite{JuP}. Using the preservation of the
K-convexity under complex interpolation (see \cite{Pis1}), it is
easy to see that it is equivalent to the K-convexity of the Banach
space $S^p(E)$ for any (all) $1<p<\infty$.

Our second principal result is the following theorem:

\begin{thm}
\label{Th Analyticity Fourier multipliers} Suppose that $G$ is an
amenable discrete group or that $G$ is the free group $\mathbb{F}_n$
with $n$ generators ($1\leq n \leq \infty$). Let $(T_t)_{t \geq 0}$ be
a $w^*$-continuous semigroup of self-adjoint completely positive
unital Fourier multipliers on the group von Neumann algebra $\VN(G)$
preserving the canonical trace. Let $E$ be a $\OK$-convex operator
space. If $G=\mathbb{F}_n$, we suppose that $E$ is locally-$C^*(\mathbb{F}_{\infty})$ with $d_f(E)=1$. Consider $1<p<\infty$. Then $(T_t \ot Id_{E})_{t\geq 0}$ defines a strongly continuous bounded analytic semigroup on the Banach space $L^p\big(\VN(G),E\big)$.
\end{thm}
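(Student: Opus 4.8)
The plan is to prove analyticity of $(T_t \ot Id_E)_{t \geq 0}$ on $L^p(VN(G),E)$ by reducing to a statement about $K$-convexity and the behavior of the individual Fourier multipliers. The key structural observation is that a $w^*$-continuous semigroup of self-adjoint completely positive unital Fourier multipliers on $VN(G)$ has symbols of the form $T_t(\lambda_s) = e^{-t\psi(s)}\lambda_s$ for a conditionally negative length function $\psi$ on $G$, by Schoenberg's theorem. Thus the generator $-A_p$ acts diagonally on the group elements, and the analyticity question is really a question about uniform bounds, in $\theta$, on the operators associated to the symbols $e^{-z\psi(s)}$ for $z$ in a sector $\Sigma_\theta$.

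First I would recall the scalar analytic semigroup criterion: $(T_t)$ extends to a bounded analytic semigroup on $L^p(VN(G),E)$ if and only if the resolvents $\lambda(\lambda + A_p)^{-1}$ are uniformly bounded on a half-plane, equivalently on a sector of angle greater than $\pi/2$. The natural route is to exploit the transference/averaging principle that underlies Pisier's scalar result: a self-adjoint completely positive unital Fourier multiplier semigroup can be written, via its generating function $\psi$, as an average (or a limit of discretizations) of the Poisson-type semigroups, and the analytic continuation of the semigroup corresponds at the symbol level to replacing $t$ by $z \in \Sigma_\theta$. Therefore the heart of the argument is to show that the map sending $\lambda_s \mapsto e^{-z\psi(s)}\lambda_s$ is completely bounded on $L^p(VN(G),E)$ with a bound uniform over $z$ in a fixed subsector, and this is where the hypotheses on $E$ and $G$ enter.

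The main obstacle, as in Pisier's theorem, is the passage from the scalar $K$-convexity statement to the operator-space $OK$-convex statement: I expect the crux to be establishing boundedness of the vector-valued Rademacher (or its operator-space analogue) projection on $S^p(E)$, and then transferring this to boundedness of the multiplier $\lambda_s \mapsto e^{-z\psi(s)}\lambda_s$ on $L^p(VN(G),E)$. For amenable $G$, I would use a Følner-type averaging argument to transfer from the abelian/classical setting, realizing the Fourier multiplier semigroup via Herz--Schur multipliers and the Gaussian or Poisson construction attached to $\psi$; the amenability gives the necessary approximate invariance to run the averaging. For $G = \mathbb{F}_n$, where amenability fails, I would instead rely on the hypothesis $d_f(E)=1$ together with the QWEP of $VN(\mathbb{F}_n)$ to ensure $L^p(VN(G),E)$ is well-defined and that the relevant completely bounded transference estimates survive; here one leans on the free-group structure of $\psi$ (e.g. the word-length function) and the known complete boundedness of the associated radial multipliers.

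In both cases the final step assembles the uniform sectorial resolvent bound from the $OK$-convexity of $E$: the $K$-convexity of $S^p(E)$ yields the bounded vector-valued Rademacher projection, which by Pisier's duality-type argument converts the pointwise symbol estimate $|e^{-z\psi(s)}| \leq 1$ on $\Sigma_\theta$ into a genuine operator bound, giving analyticity of $(T_t \ot Id_E)$ with angle $\theta$ approaching $\pi/2$ as $p \to 2$. I expect the technical heart to lie in making the transference completely bounded rather than merely bounded, since we work with operator spaces $E$ and vector-valued noncommutative $L^p$-spaces, so controlling the cb-norms uniformly in $z$ is the step most likely to require the full strength of the $OK$-convexity hypothesis and the earlier machinery on $L^p(M,E)$.
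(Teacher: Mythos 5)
Your proposal has a genuine gap at its core: you never supply the mechanism that turns $K$-convexity into analyticity. The final step, where you claim that boundedness of the vector-valued Rademacher projection ``converts the pointwise symbol estimate $|e^{-z\psi(s)}|\leq 1$ on $\Sigma_\theta$ into a genuine operator bound,'' is not a valid argument: pointwise contractivity of the symbol never yields $L^p$-boundedness of a Fourier multiplier, vector-valued or not (this failure is exactly why the theorem is nontrivial), and there is no duality argument that performs such a conversion. The known route --- both in Pisier's scalar theorem and in the paper --- is entirely different. One applies Beurling's criterion: it suffices to prove $\bnorm{(Id-T_t)^n\ot Id_E}_{L^p(VN(G),E)\to L^p(VN(G),E)}\leq \rho^n$ for some $\rho<2$. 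To get this, the paper uses (a) Ricard's Rota dilation $T_t^k=Q\mathbb{E}_k\pi$ of $T_t=(T_{t/2})^2$, realized in the crossed product $M=\Gamma_{-1}\big(\ell^{2,T}\ot_2\ell^2\big)\rtimes_\alpha G$; (b) a Fell-absorption principle showing $\bnorm{M_\varphi^n\ot Id_E}\leq \bnorm{M_\varphi^{\ot n}\ot Id_E}$ on the $n$-fold tensor power algebra, which is the step that manufactures $n$ mutually commuting contractive projections $\Pi_j\ot Id_E$ out of powers of a single operator; (c) Pisier's lemma that in a space not containing $\ell^1_{n+1}$'s uniformly, products of $n$ commuting norm-one projections satisfy the $\rho^n$ bound, applied to $L^p(M,E)$, which is $K$-convex precisely because $E$ is $OK$-convex; and (d) Beurling's theorem. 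None of steps (a), (b), (d) appear in your sketch, and step (c) appears only as a vague appeal to the Rademacher projection rather than as the commuting-projection lemma that is actually used.

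A secondary but substantive error is your account of where the hypotheses on $G$ enter. In the paper, amenability of $G$ (respectively $G=\mathbb{F}_n$ with $VN(\mathbb{F}_n)$ QWEP and $d_f(E)=1$) is used to guarantee that the \emph{dilation algebra} $M$ above is hyperfinite (by Connes) respectively QWEP (by \cite[Proposition 4.8]{Arh}), so that $L^p(M,E)$ is well defined and Markov maps tensorize with $Id_E$ via Proposition \ref{prop-tensorisation of CP maps}; it has nothing to do with F\o lner averaging, Herz--Schur transference, or cb-norms of radial multipliers, and no such transference argument is available here. Your opening observation via Schoenberg's theorem (symbols $e^{-t\psi(s)}$ with $\psi$ conditionally negative) is correct but plays no role in any working proof of this statement.
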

We will show that this result can be used, by example, in the case
where $E$ is a Schatten space $S^q$ or a commutative $L^q$-space
with $1<q<\infty$.

The next theorem is a $R$-analytic version of Theorem \ref{Th
Analyticity Fourier multipliers} and it is our last principal
result.
\begin{thm}
\label{Th R-Analyticity general semigroups} Let $M$ be a von Neumann
algebra with $\QWEP$ equipped with a normal faithful state. Let
$(T_t)_{t\geq 0}$ a $w^*$-continuous semigroup of operators on $M$.
Suppose $1< p,q <\infty$ and $0<\alpha<1$. Let $E$ be an operator
space such that $E=\big(OH(I),F\big)_\alpha$ for some index set $I$
and for some operator space $F$ with $\frac{1}{p}=\frac{1-\alpha}{2}+\frac{\alpha}{q}$. Assume one of the following assertions.
\begin{enumerate}
  \item Each $T_t$ is hyper-factorizable and $F$ is $\OUMD_{q}$.
  \item Each $T_t$ is $\QWEP$-factorizable and $F$ is $\OUMD_{q}'$.
\end{enumerate}
Then $(T_t \ot Id_{E})_{t \geq 0}$ defines a strongly continuous $R$-analytic semigroup on the Banach space $L^p(M,E)$.
\end{thm}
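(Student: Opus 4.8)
The plan is to deduce the $R$-analyticity on $L^p(M,E)$ from $R$-sectoriality estimates at the two endpoints of a complex interpolation scale, the analyticity being manufactured at the $OUMD$ endpoint and transported to $L^p(M,E)$ by interpolation. First I would record the interpolation identity for vector valued noncommutative $L^p$-spaces: since $E=(OH(I),F)_\alpha$ and $\frac1p=\frac{1-\alpha}{2}+\frac{\alpha}{q}$, one has the completely isometric identification $L^p(M,E)=\big(L^2(M,OH(I)),L^q(M,F)\big)_\alpha$, and under it each $T_t\ot Id_E$ is the interpolated operator of $T_t\ot Id_{OH(I)}$ and $T_t\ot Id_F$. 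At the Hilbertian endpoint the situation is trivial: each $T_t$ is a $\phi$-Markov map, hence an $L^2$-contraction, so $(T_t\ot Id_{OH(I)})_{t\geq0}$ is a $C_0$-semigroup of contractions on the Hilbert space $L^2(M,OH(I))$; its negative generator $A_2$ is $m$-accretive and therefore $R$-sectorial of angle $\frac{\pi}{2}$, since on a Hilbert space $R$-boundedness coincides with uniform boundedness.

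The core of the argument is the $L^q$-endpoint: under hypothesis $(1)$ (resp. $(2)$) the negative generator $A_q$ of $(T_t\ot Id_F)_{t\geq0}$ on $L^q(M,F)$ is $R$-sectorial of some angle $\omega_q<\frac{\pi}{2}$, equivalently $(T_t\ot Id_F)_{t\geq0}$ is $R$-analytic. This is the step where genuine analyticity is created, the maps $T_t$ being only $w^*$-continuous and not analytic on $M$. I would exploit the factorizations $T_t=J_0^*\circ J_1$: the Markov $*$-monomorphisms $J_0,J_1$ extend to complete isometries at the level of the vector valued $L^q$-spaces while $J_0^*$ extends to a complete contraction, exhibiting $T_t\ot Id_F$ as a compression $(J_0^*\ot Id_F)(J_1\ot Id_F)$ into a larger space $L^q(P,F)$, a noncommutative analogue of Akcoglu's dilation. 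Using that the relevant class of factorizable maps is stable under composition, the powers $T_1^n=T_n$ remain factorizable and dilate coherently, which reduces the claim to an $R$-Ritt estimate for the single operator $T_1\ot Id_F$; this in turn amounts to the $R$-boundedness on $L^q(M,F)$ of the angular (martingale-transform) operator attached to the dilation. The $OUMD_q$ property of $F$ in case $(1)$ (resp. the $OUMD_q'$ property in case $(2)$) is precisely the operator space incarnation of the $UMD$ mechanism guaranteeing this $R$-boundedness, and a standard discretization criterion then converts the uniform $R$-Ritt estimate into $R$-sectoriality of $A_q$ of angle $\omega_q<\frac{\pi}{2}$.

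With both endpoints in hand I would interpolate. By the interpolation theorem for $R$-sectorial operators, the generator $A_p$ on the intermediate space $L^p(M,E)=\big(L^2(M,OH(I)),L^q(M,F)\big)_\alpha$ is $R$-sectorial with angle at most $(1-\alpha)\frac{\pi}{2}+\alpha\,\omega_q$, which is strictly smaller than $\frac{\pi}{2}$ because $\omega_q<\frac{\pi}{2}$. Since $-A_p$ generates the strongly continuous semigroup $(T_t\ot Id_E)_{t\geq0}$, strong continuity being inherited from the endpoint semigroups, $R$-sectoriality of angle $<\frac{\pi}{2}$ is exactly $R$-analyticity, which is the assertion.

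The main obstacle is the $OUMD$ endpoint. The difficulty is to manufacture $R$-analyticity on $L^q(M,F)$ from the \emph{factorizability of the individual maps} alone, \emph{without} a $w^*$-continuous group dilation of the whole semigroup; that stronger hypothesis, $QWEP$-dilatability, is what the paper's main theorem adds in order to reach a full $H^\infty$ calculus. Concretely, two points require care: transferring the abstract $OUMD_q$ / $OUMD_q'$ property into the $R$-boundedness of the specific operator family encoding the angular part of the factorized semigroup, and passing from the resulting discrete $R$-Ritt estimate for $T_1$ to $R$-analyticity of the continuous parameter semigroup in a manner uniform enough to survive the interpolation with the Hilbertian endpoint.
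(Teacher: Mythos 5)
Your architecture inverts the paper's, and the inversion is exactly where the argument breaks. You propose to \emph{create} the analyticity at the $L^q(M,F)$ endpoint (R-sectoriality of angle $\omega_q<\frac{\pi}{2}$) and to use only angle-$\frac{\pi}{2}$ sectoriality at the Hilbertian endpoint; the paper does the opposite, and for a reason. Your mechanism for the $L^q$ endpoint --- reduce to an R-Ritt estimate for $T_1\ot Id_F$ and extract that estimate from the Rota-type dilation plus $OUMD_q$ --- fails on two counts. First, what the dilation $T_t^k=Q\mathbb{E}_k\pi$ plus $OUMD_q$ actually yields is the noncommutative Stein inequality (Proposition \ref{Prop noncommutative Stein on conditional expectations}), i.e.\ R-boundedness of the conditional expectations and hence of the orbit $\{T_t\ot Id_F : t\geq 0\}$; a Ritt-type estimate needs in addition R-boundedness of $\{nT_1^n(Id-T_1)\ot Id_F : n\geq 1\}$, and through the dilation $nT_1^n(Id-T_1)=Q\,\big(n(\mathbb{E}_n-\mathbb{E}_{n+1})\big)\,\pi$. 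Since $\mathbb{E}_n-\mathbb{E}_{n+1}$ is a difference of norm-one projections, the factor $n$ makes these operators a priori unbounded: UMD-type properties bound signed sums $\sum_k\varepsilon_k d_k$ of martingale differences, they never produce the $O(1/n)$ decay of an \emph{individual} difference $Q(\mathbb{E}_n-\mathbb{E}_{n+1})\pi$ that your ``angular'' estimate requires. Second, even granting an R-Ritt estimate for the single operator $T_1\ot Id_F$, this does not control the semigroup near $t=0$: by the failure of the spectral mapping theorem, a contraction semigroup can have generator with spectrum $\{1+2\pi i k : k\in\Z\}$, so that $T_1=e^{-1}Id$ is trivially (R-)Ritt while $(T_t)_{t\geq 0}$ is not analytic. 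Any argument at this endpoint must be uniform in small $t$, which is what Beurling's criterion (Theorem \ref{Th�or�me de Beurling}) demands and what your discretization discards. A telling consistency check: if your endpoint claim were provable with these tools, the theorem would hold with $E=F$ (no $OH(I)$ factor at all), a statement the paper never makes; the interpolation with $OH(I)$ at parameter $\alpha<1$ is the engine of the proof, not a notational convenience.

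The paper instead puts each ingredient where it is cheap. On $L^2\big(M,OH(I)\big)=\ell^2_I\big(L^2(M)\big)$, a Hilbert space, Theorem \ref{Th Analyticity general semigroups} (Rota dilation, Pisier's lemma on commuting projections with $n=1$, Beurling) gives bounded analyticity, and since bounded sets in $B(H)$ are R-bounded, the semigroup is R-analytic there for free. On $L^q(M,F)$, only R-\emph{boundedness} of $\{T_t\ot Id_F\}$ is proved, via the Stein inequality applied along a common-denominator Rota dilation at rational times, then strong continuity. Fackler's interpolation theorem (Theorem \ref{Th Fackler R-analyticity}), which requires K-convexity of both endpoints, then yields R-analyticity on $L^p(M,E)=\big(L^2(M,OH(I)),L^q(M,F)\big)_\alpha$. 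Note finally that your last step invokes an ``interpolation theorem for R-sectorial operators'' with the angle bound $(1-\alpha)\frac{\pi}{2}+\alpha\omega_q$; no such theorem is available without K-convexity-type hypotheses, since R-boundedness does not pass to complex interpolation spaces in general. That step could be repaired by citing Fackler's theorem with your two hypothetical endpoint inputs, but the gap at the $L^q(M,F)$ endpoint cannot.
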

Finally, we also give a version of these two theorems for semigroups of Schur multipliers.


The paper is organized as follows. Section 2 gives a brief
presentation of vector valued noncommutative $L^p$-spaces and we
introduce some notions which are relevant to our paper. The next
section 3 contains a proof of Theorem \ref{Th Analyticity Fourier
multipliers}. Section 4 is devoted to prove Theorem \ref{Th
R-Analyticity general semigroups}. In section 5, we give
applications to functional calculus. In particular, we prove Theorem
\ref{Th calcul fonctionnel complètement borné}. Finally, we present
some natural examples to which the results of this paper can be
applied.

\section{Preliminaries}
The readers are referred to \cite{ER}, \cite{Pau} and \cite{Pis6}
for details on operator spaces and completely bounded maps and to
the survey \cite{PiX} for noncommutative $L^p$-spaces.

The theory of vector valued noncommutative $L^p$-spaces was
initiated by G. Pisier \cite{Pis5} for the case where the underlying
von Neumann algebra is hyperfinite and equipped with a faithful
normal semifinite trace. Suppose $1 \leq p < \infty$. For an
operator space $E$ and a hyperfinite von Neumann algebra $M$ equipped with a faithful normal semifinite trace, we define by complex interpolation
\begin{equation}
\label{Def vector valued Lp non com}
L^p(M,E)=\big(M \ot_{\min} E, L^1(M)\widehat{\ot}
E\big)_{\frac{1}{p}},
\end{equation}
where $\ot_{\min}$ and $\widehat{\otimes}$ denote the injective and
the projective tensor product of operator spaces. In \cite{Jun1} and
\cite{Jun2}, M. Junge extended this theory to the case where the
underlying von Neumann algebra satisfies QWEP using the following
characterization of QWEP von Neumann algebras. It is unknown
whether every von Neumann algebra has this property. See the survey
\cite{Oza} for more information on this notion.

\begin{prop}
\label{prop Carac des QWEP}
A von Neumann algebra $M$ is $\QWEP$ if and only if there exist an
index set $I$, a free ultrafilter $\ul$ on an index set $L$, a
normal $*$-monomorphism
$$
\pi \co M \rightarrow B\big(\ell^2_I\big)^{\ul}
$$
and a normal conditional expectation
$$
\mathbb{E} \co B\big(\ell^2_I\big)^{\ul} \rightarrow \pi(M)
$$
where $B\big(\ell^2_I\big)^{\ul}$ denote the ultrapower of $B\big(\ell^2_I\big)$ associated with $\ul$.
\end{prop}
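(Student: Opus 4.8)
The statement is Kirchberg's ultraproduct description of $QWEP$ von Neumann algebras (see \cite{Oza} and \cite{Jun1}), so the plan is to prove the two implications separately. The converse implication reduces to permanence properties of the class $QWEP$, while the direct implication is the substantial part: it rests on a modular embedding into the ultrapower, after which Takesaki's theorem produces the conditional expectation essentially for free.

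For the converse, suppose we are given the normal $*$-monomorphism $\pi\colon M\to B\big(\ell^2_I\big)^{\ul}$ and the normal conditional expectation $\mathbb{E}\colon B\big(\ell^2_I\big)^{\ul}\to\pi(M)$. First, $B\big(\ell^2_I\big)$ is injective, hence has the weak expectation property, hence is $QWEP$. Second, the von Neumann ultrapower $B\big(\ell^2_I\big)^{\ul}$ is again $QWEP$, since the class $QWEP$ is stable under ultraproducts; this permanence is standard and recorded in \cite{Oza}. Third, $QWEP$ passes to the range of a normal conditional expectation: if $N$ is a von Neumann subalgebra of a $QWEP$ algebra $P$ and there is a normal conditional expectation $P\to N$, then $N$ is $QWEP$. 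Applying this to $\mathbb{E}$ shows that $\pi(M)$, and hence $M$ itself (as $\pi$ is a $*$-isomorphism onto $\pi(M)$), is $QWEP$.

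For the direct implication, assume $M$ is $QWEP$ and fix a faithful normal state $\phi$ on $M$ (the general case being handled with a faithful normal semifinite weight). The heart of the argument is to produce, from the $QWEP$ hypothesis, an index set $I$, a free ultrafilter $\ul$ on a set $L$, and a \emph{state-preserving} normal $*$-monomorphism $\pi\colon M\to B\big(\ell^2_I\big)^{\ul}$ into the von Neumann ultrapower of $B\big(\ell^2_I\big)$ (taken with respect to a suitable faithful family of normal states), chosen so that $\pi$ intertwines the modular automorphism groups, that is $\sigma_t^{\psi}\circ\pi=\pi\circ\sigma_t^{\phi}$ for all $t\in\R$, where $\psi$ is the ultrapower state. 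One builds $\pi$ by approximating $M$ in distribution by matricial data and gluing these approximations along $\ul$; this is where $QWEP$ enters, via Kirchberg's tensor characterizations and the weak expectation property of a C*-algebra surjecting onto $M$, and this is the main obstacle. It is crucial that the approximating maps live on varying blocks and are assembled only in the ultrapower, so that no single completely positive map $B\big(\ell^2_I\big)\to M$ fixes $M$ (which would force injectivity of $M$); the ultrapower is exactly what absorbs the unavoidable errors.

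Once such a modular embedding $\pi$ is in hand, the conditional expectation is automatic. Indeed, the intertwining relation gives $\sigma_t^{\psi}\big(\pi(M)\big)=\pi\big(\sigma_t^{\phi}(M)\big)=\pi(M)$ for all $t$, so $\pi(M)$ is globally invariant under the modular automorphism group of the faithful normal state $\psi$ on $B\big(\ell^2_I\big)^{\ul}$. By Takesaki's theorem there exists a unique $\psi$-preserving normal conditional expectation $\mathbb{E}\colon B\big(\ell^2_I\big)^{\ul}\to\pi(M)$, which is exactly the map required. Since the construction of the modular ultrapower embedding in the non-tracial setting requires Raynaud's ultraproducts and a careful use of modular theory, for this Preliminaries statement I would in fact quote the result directly from \cite{Oza}, \cite{Jun1} and \cite{Jun2}.
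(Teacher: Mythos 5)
The paper offers no proof of this proposition at all: it is quoted as Junge's characterization, with pointers to \cite{Jun1}, \cite{Jun2} and the survey \cite{Oza}, so your closing sentence (``quote the result directly'') is in fact exactly what the paper does. Your converse direction is correct and standard: $B(\ell^2_I)$ is injective, hence has WEP, hence is $QWEP$; $QWEP$ is stable under (Raynaud) ultrapowers; and $QWEP$ passes to the image of a normal conditional expectation --- all of these permanence properties are in \cite{Oza}.

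Your sketch of the forward implication, however, contains a genuine flaw at the step you call ``automatic''. You invoke Takesaki's theorem with respect to ``the ultrapower state $\psi$'', a faithful normal state on $B\big(\ell^2_I\big)^{\ul}$. No such state exists: if $I$ is uncountable, $B(\ell^2_I)$ itself is not $\sigma$-finite, and even for $I=\nn$ the Groh--Raynaud ultrapower over a free ultrafilter on $\nn$ is not $\sigma$-finite. Indeed, taking an uncountable almost disjoint family $\{A_\alpha\}$ of infinite subsets of $\nn$ with increasing enumerations $(a^\alpha_l)_l$, the classes $p_\alpha$ of the families of rank-one projections $\big(P_{e_{a^\alpha_l}}\big)_l$ are nonzero projections (pair them with the vector states along $e_{a^\alpha_l}$), and for $\alpha\neq\beta$ one has $a^\alpha_l\neq a^\beta_l$ for all large $l$, so $p_\alpha p_\beta=0$; this gives uncountably many pairwise orthogonal nonzero projections. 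Hence there is no faithful normal state to which Takesaki's criterion could be applied, and the weight version is no rescue in the form you state, since an ultraproduct of states does not yield a faithful normal semifinite weight on the full ultrapower. There is also a structural reason your route cannot be right: a $\psi$-preserving conditional expectation with $\psi$ faithful normal is automatically faithful, whereas the paper explicitly warns, immediately after the proposition, that ``in general, $\mathbb{E}$ is not faithful''. In Junge's actual argument the expectation is assembled directly from the weak expectation property, by gluing unital completely positive approximants along the ultrafilter (which is also where normality comes from), not from modular invariance; the state-preserving modular picture you describe is available only after compressing by the support projection of the ultraproduct state, i.e.\ on the Ocneanu-type corner, which is strictly smaller than $B\big(\ell^2_I\big)^{\ul}$. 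So the honest course for this Preliminaries item is the one in your last sentence --- cite \cite{Jun1}, \cite{Jun2} and \cite{Oza} --- which is precisely what the paper does.
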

Note that, in general, $\mathbb{E}$ is not faithful.
%
%

Now, we introduce the vector valued noncommutative $L^p$-spaces
associated to a von Neumann algebra with QWEP equipped with a normal faithful state. Let $M$, $\beta=(\pi,\mathbb{E},\ul,I)$ be as in Proposition \ref{prop Carac des QWEP}. Suppose $1 < p < \infty$.
Let $E$ be an operator space. Recall that the vector valued noncommutative $L^p$-space $L^p(M,\beta,E)$ is a closed subspace of the ultrapower $S^p_I(E)^\ul$ which depends on the choice of $\beta$ (see \cite{Jun2} for a precise definition).


However, when $E$ is locally-$C^*(\mathbb{F}_{\infty})$, the
$L^p(M,\beta, E)$'s are all equivalent allowing the constant
$d_f(E)^2$. Thus, we can still say that $L^p(M,\beta,E)$ does not
depend on the choice of $\beta$, and thus we will use the most
convenient notation $L^p(M,E)$ instead of $L^p(M,\beta,E)$.

Note the following vector valued extension property of completely
positive maps between noncommutative $L^p$-spaces, see \cite{Pis2} and \cite{Jun2} . 
\begin{prop}
\label{prop-tensorisation of CP maps}
 Suppose $1 < p < \infty$.
\begin{enumerate}
  \item Let $M$ and $N$ be hyperfinite von Neumann algebras equipped with normal semifinite faithful traces and let $E$ be an operator space. Let $T \co M \to N$ be a trace preserving unital normal completely positive map. Then the operator $T \ot Id_E$ induces a bounded operator from $L^p(M,E)$ into $L^p(N,E)$ and we have
$$
\norm{T \ot Id_E}_{L^p(M,E) \to L^p(N,E)} \leq \norm{T}_{L^p(M) \to
L^p(N)}.
$$
  \item Let $M$ and $N$ be von Neumann algebras with $\QWEP$ equipped with
normal faithful states $\phi$ and $\psi$ respectively and let $E$ be
a locally-$C^*(\mathbb{F}_{\infty})$ operator space.  Let $T \co M \to N$ be a $(\phi, \psi)$-Markov map. Then the operator $T \ot Id_E$ induces a bounded operator from $L^p(M,E)$ into $L^p(N,E)$ and we have
$$
\norm{T \ot Id_E}_{L^p(M,E) \to L^p(N,E)} \leq d_{f}(E)
\norm{T}_{L^p(M) \to L^p(N)}.
$$
\end{enumerate}
\end{prop}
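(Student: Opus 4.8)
The plan is to handle the two assertions in turn, both by interpolating between the endpoints of the couple that defines $L^p(\cdot,E)$ and, in the $QWEP$ case, by transporting everything to the hyperfinite Schatten level where the first assertion is already available.

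For assertion (1) I would interpolate on the couple appearing in \eqref{Def vector valued Lp non com}. At the $p=\infty$ endpoint the map $T\ot\Id_E$ is a complete contraction of $M\ot_{\min}E$ into $N\ot_{\min}E$: since $T$ is unital and completely positive one has $\norm{T}_{cb}=\norm{T(1)}=1$, and $\ot_{\min}$ is functorial for completely bounded maps. At the $p=1$ endpoint I would use trace-preservation: the trace-adjoint $T^\ast\colon N\to M$ is then unital and completely positive, so its preadjoint is a complete contraction $\lo{M}\to\lo{N}$ which agrees with $T$ on $M\cap\lo{M}$, and $\otpb$ is likewise functorial, giving a complete contraction $\lo{M}\otpb E\to\lo{N}\otpb E$. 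Since the two extensions are restrictions of the same map, Calder\'on interpolation yields that $T\ot\Id_E$ is a (complete) contraction $L^p(M,E)\to L^p(N,E)$. As $\norm{T}_{\lp{M}\to\lp{N}}=1$ for a trace-preserving unital completely positive map, this is the asserted estimate.

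For assertion (2) the obstacle is that $L^p(M,\beta,E)$ is only given as a subspace of the ultrapower $S^p_I(E)^{\ul}$ and a priori depends on $\beta=(\pi,\mathbb{E},\ul,I)$. My plan is first to treat the case where $E$ is, completely isometrically, a subspace of $C^\ast(\mathbb{F}_\infty)$. Fixing the data of Proposition \ref{prop Carac des QWEP}, the algebras $B(\ell^2_I)$ are hyperfinite, so assertion (1) applies to the structural maps of the realisation (the normal $\ast$-monomorphism $\pi$ and the conditional expectation $\mathbb{E}$, which are unital completely positive and, by the modular condition $(4)$ of Definition \ref{defmarkov}, compatible with the states and hence with the interpolation structure). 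This makes the induced map a contraction on $S^p_I(E)$, a property that survives passage to the ultrapower $S^p_I(E)^{\ul}$; since $T$ is $(\phi,\psi)$-Markov, $T\ot\Id_E$ then carries the subspace $L^p(M,\beta,E)$ into $L^p(N,\beta,E)$ contractively.

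Finally, to pass from $E\subseteq C^\ast(\mathbb{F}_\infty)$ to a general locally-$C^\ast(\mathbb{F}_\infty)$ space I would use that the norms of $L^p(M,E)$ and $L^p(N,E)$ are determined by the finite-dimensional subspaces of $E$, each of which embeds into $C^\ast(\mathbb{F}_\infty)$ with completely bounded distortion at most $d_f(E)$. Carrying out this comparison once produces exactly the factor $d_f(E)$ (rather than the $d_f(E)^2$ that would be needed to compare two full $\beta$-realisations), and again $\norm{T}_{\lp{M}\to\lp{N}}=1$. I expect the genuinely delicate point to be this last part together with the ultrapower bookkeeping---checking that the extension is well defined independently of $\beta$ and that the local-$C^\ast(\mathbb{F}_\infty)$ distortion enters precisely once---rather than the routine endpoint estimates of assertion (1); for the precise construction I would follow \cite{Jun2} and \cite{Pis2}.
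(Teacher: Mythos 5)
The paper does not actually prove this proposition: it is quoted from the references \cite{Pis2} and \cite{Jun2}, so your attempt has to stand on its own. In its present form it does not, and the gap is concrete.

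For assertion (1), your two endpoint estimates are correct: $T\ot Id_E$ is completely contractive from $M\ot_{\min}E$ to $N\ot_{\min}E$ because $T$ is unital completely positive, and completely contractive from $L^1(M)\widehat{\ot}E$ to $L^1(N)\widehat{\ot}E$ because the preadjoint of the trace-adjoint $T^*$ is a complete contraction extending $T|_{M\cap L^1(M)}$. Interpolation then yields $\norm{T\ot Id_E}_{L^p(M,E)\to L^p(N,E)}\leq 1$. The gap is your last sentence: it is \emph{not} true that $\norm{T}_{L^p(M)\to L^p(N)}=1$ for every trace-preserving unital normal completely positive map when the traces are merely semifinite (it is true for finite traces, since then $1\in L^p(M)$ witnesses the lower bound). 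Take $M=N=B\big(\ell^2(\mathbb{F}_2)\big)$ with its usual trace -- a hyperfinite algebra, so squarely within the scope of assertion (1) -- and $T(x)=\frac{1}{4}\sum_{i=1}^{2}\big(\lambda_{g_i}x\lambda_{g_i}^*+\lambda_{g_i}^*x\lambda_{g_i}\big)$, where $\lambda_{g_1},\lambda_{g_2}$ are the left-translation unitaries of the free generators. This map is normal, unital, completely positive and trace-preserving; but on $L^2\big(B(\ell^2(\mathbb{F}_2))\big)=S^2$ each conjugation $x\mapsto \lambda_g x\lambda_g^*$ permutes the matrix units by $E_{h,k}\mapsto E_{gh,gk}$, a free action, so $T$ acts on $S^2$ as a direct sum of copies of $\frac{1}{4}\sum_{g\in S}\lambda_g$ ($S$ the symmetric generating set), whence $\norm{T}_{L^2\to L^2}=\frac{\sqrt{3}}{2}<1$ by Kesten's theorem, and then $\norm{T}_{L^p\to L^p}<1$ for all $1<p<\infty$ by interpolating with the contractive endpoints. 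So your argument proves only the weaker bound $\norm{T\ot Id_E}\leq 1$ (which, for what it is worth, is all the paper ever uses), not the stated inequality. The stated inequality is exactly Pisier's theorem on regular operators \cite{Pis2}: the supremum of $\norm{T\ot Id_E}_{L^p(M,E)\to L^p(N,E)}$ over all operator spaces $E$ is the regular norm of $T$ on $L^p$, and for completely positive maps the regular norm coincides with the ordinary norm $\norm{T}_{L^p(M)\to L^p(N)}$; there is no soft interpolation route to this.

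For assertion (2), your plan -- first $E\subset C^*(\mathbb{F}_\infty)$, working inside $S^p_I(E)^{\ul}$, then general locally-$C^*(\mathbb{F}_\infty)$ spaces with a single loss of $d_f(E)$ -- has the right overall shape, but the crucial step is not carried out and, as written, does not parse: assertion (1) cannot be applied ``to the structural maps of the realisation'', because $T$ is a map from $M$ to $N$, not a map on $B(\ell^2_I)$, and the conditional expectation $\mathbb{E}$ of Proposition \ref{prop Carac des QWEP} is defined on the ultrapower $B(\ell^2_I)^{\ul}$ with range $\pi(M)$, not on $B(\ell^2_I)$ itself. What is actually required is a completely positive lift of $T$ to a map between the two ultrapowers, compatible with both embeddings, with the (generally non-faithful) expectations, and with the state-dependent $L^p$-structure, together with the bookkeeping showing the local distortion $d_f(E)$ enters only once; all of that is precisely the content of the preprint \cite{Jun2} to which you defer. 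So as a self-contained argument assertion (2) remains incomplete; in effect you have reproduced what the paper itself does, namely cite \cite{Jun2}.
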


%

%

Suppose that $G$ is a discrete group. We denote by $e_G$ the neutral
element of $G$. We denote by $\lambda_g \co \ell^2_G \to \ell^2_G$ the unitary operator of left translation by $g$ and $\VN(G)$ the von Neumann algebra of $G$ spanned by the $\lambda_g$ where $g \in G$. It is an finite algebra with normalized trace given by
$$
\tau_{G}(x)=\big\langle\epsilon_{e_G},x(\epsilon_{e_G})\big\rangle_{\ell^2_G}
$$
where $(\epsilon_g)_{g \in G}$ is the canonical basis of $\ell^2_G$ and $x \in \VN(G)$. For any $g \in G$, note that we have
\begin{equation}
\label{tracelambdag}
    \tau_{G}(\lambda_g)=\delta_{g,e}.
\end{equation}
Recall that the von Neumann algebra $\VN(G)$ is hyperfinite if and only if $G$ is amenable \cite[Theorem 3.8.2]{SS}. A Fourier multiplier is a normal linear map $T \co \VN(G) \to \VN(G)$ such that there exists a function $\varphi \co G \to \C$ such that for any $g \in G$ we have $T(\lambda_g)=\varphi_g \lambda_g$. In this case, we denote $T$ by
$$
\begin{array}{cccc}
   M_\varphi \co   &    \VN(G)      &  \longrightarrow   & \VN(G)   \\
          &    \lambda_g  &  \longmapsto       & \varphi_g\lambda_g.   \\
\end{array}
$$

Let $M$ be a von Neumann algebra equipped with a semifinite normal
faithful trace $\tau$. Suppose that $T \co M \to M$ is a normal
contraction. We say that $T$ is selfadjoint if for all $x,y \in
M\cap L^1(M)$ we have
$$
\tau\big(T(x)y^*\big)=\tau\big(x(T(y))^*\big).
$$
In this case, it is not hard to show that the restriction $T|M \cap
L^1(M)$ extends to a contraction $T \co L^1(M) \to L^1(M)$. By
complex interpolation, for any $ 1\leq p\leq \infty$, we obtain a
contractive map $T \co L^p(M) \to L^p(M)$. Moreover, the
operator $T \co L^2(M) \to L^2(M)$ is selfadjoint. If $T\colon
M \to M$ is actually a normal selfadjoint complete contraction,
it is easy to see that the map $T \co L^p(M) \to L^p(M)$ is
completely contractive for any $ 1 \leq p \leq \infty$. It is not
difficult to show that a contractive Fourier multiplier $M_\varphi
\co \VN(G) \to \VN(G)$ is selfadjoint if and only if $\varphi
\co G \to \C$ is a real function. Finally, one can prove that
a contractive Schur multiplier $M_A \co B\big(\ell^2_I\big) \to
B\big(\ell^2_I\big)$ associated with a matrix $A$ 
is selfadjoint if and only if all entries of $A$ are real.

Now, we introduce the operator space version of the Banach space
property UMD. Let $E$ be an operator space and $1< p< \infty$. We
say that $E$ is $\OUMD_p$ if there exists a positive constant $C$
such that for any positive integer $n$, any choice of signs
$\varepsilon_{k}=\pm 1$ and any martingale difference sequence
$(dx_k)_{k=1}^n \subset L^p(M,E)$ relative to a filtration
$(M_k)_{k\geq 1}$ of a hyperfinite von Neumann algebra $M$ equipped
with a normal faithful finite trace we have
\begin{equation}
\label{defumd} 
\Bgnorm{\sum_{k=1}^n\varepsilon_k dx_k}_{L^p(M, E)}
\leq C \Bgnorm{\sum_{k=1}^n dx_k}_{L^p(M,E)}.
\end{equation}
See \cite{Mus} and \cite{Qiu} for more information on this property.
We also need a variant of this property for QWEP von
Neumann algebras.

\begin{defi}
Suppose $1< p< \infty$. Let $E$ be a
locally-$C^*(\mathbb{F}_{\infty})$ operator space with $d_f(E)=1$.
We say that $E$ is $\OUMD_p'$ if there exists a positive constant $C$
such that for any positive integer $n$, any choice of signs
$\varepsilon_{k}=\pm 1$ and any martingale difference sequence
$(dx_k)_{k=1}^n \subset L^p(M,E)$ relative to a filtration
$(M_k)_{k\geq 1}$ of a $\QWEP$ von Neumann algebra $M$ equipped with a
normal faithful state we have the inequality (\ref{defumd}).
\end{defi}
Suppose $1<p<\infty$. Any $\OUMD_p'$-operator space $E$ is $\OUMD_p$.
Let $1<q,r<\infty$ and $0<\theta<1$ be such that $\frac{1}{p}=\frac{1-\alpha}{q}+\frac{\alpha}{r}$. If $(E_0,E_1)$ is a compatible couple of operator spaces, where $E_0$ is $\OUMD_q$ and $E_1$ is $\OUMD_r$, then the complex interpolation operator space $E_\theta=(E_0,E_1)_\alpha$ is $\OUMD_p$. For any index set $I$ and any $1<p<\infty$, the operator Hilbert space $OH(I)$ is $\OUMD_p$. If $E$ is $\OUMD_p$ then the Banach space $S^p(E)$ is UMD (hence
K-convex). It is easy to see that the same properties are valid
for $\OUMD_p'$ operator spaces (with the same proofs).




\section{Analyticity}

Let $X$ be a Banach space. A strongly continuous semigroup
$(T_t)_{t\geq 0}$ is called bounded analytic if there exist
$0<\theta<\frac{\pi}{2}$ and a bounded holomorphic extension
$$
\begin{array}{cccc}
    &  \Sigma_\theta   &  \longrightarrow   &  B(X)  \\
    &  z   &  \longmapsto       &  T_z.  \\
\end{array}
$$
See \cite{KW} and \cite{Haa} for more information on this notion.

We need the following theorem which is a corollary \cite[Lemma
4]{Pis4} of a result of Beurling \cite{Beu} (see also \cite{Fac} and \cite{Hin}).
\begin{thm}
\label{Théorème de Beurling} 
Let $X$ be a Banach space. Let $(T_t)_{t\geq 0}$ be a strongly continuous semigroup of contractions on $X$. Suppose that there exists some integer $n \geq 1$ such that for any $t> 0$
\begin{equation*}\label{}
\bnorm{(Id_{X}-T_t)^n}_{X \to X} < 2^{n}.
\end{equation*}
Then the semigroup $(T_t)_{t \geq 0}$ is bounded analytic.
\end{thm}

Moreover, we will use the following lemma \cite[Lemma 1.5]{Pis4}.
\begin{lemma}
\label{Lemma of Pisier on projections} Let $n\geq 1$ be an integer.
Suppose that $X$ is a real Banach space, such that, for some $\lambda>1$, $X$ does not contain any subspace $\lambda$-isomorphic to $\ell^1_{n+1}$. Then there exists a real number $0<\rho<2$ such that if $P_1,\ldots, P_n$ is any finite collection of mutually commuting norm one projections on $X$, then
\begin{equation*}
\Bgnorm{\prod_{1 \leq k \leq n}(Id_{X}-P_k)}_{X \to X} \leq \rho^n.
\end{equation*}
\end{lemma}
Note that it is known that a complex Banach space contains
$\ell_n^1$'s uniformly if and only if the underlying real Banach space has the same property. 

Finally, we recall the next result (see \cite[Lemma 1]{DH} and
\cite[Theorem 2.14]{Knu} for a complete proof).
\begin{lemma}
\label{Lemma isomorphism and distributions} Let $M$ and $N$ be two
von Neumann algebras equipped with faithful normal finite traces,
and let $(x_i)_{i \in I} \in M$ and $(y_i)_{i \in I} \in N$ be
families of elements of $M$ and $N$ respectively that have the same
$*$-distribution. Then the von Neumann algebras generated
respectively by the $x_i$'s and the $y_i$'s are isomorphic, with a
normal $*$-isomorphism sending $x_i$ on $y_i$ and preserving the
trace.
\end{lemma}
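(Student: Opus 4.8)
The plan is to build the isomorphism first at the purely algebraic level and then extend it to the von Neumann algebras by a spatial (GNS) argument. Write $M_0 \subseteq M$ and $N_0 \subseteq N$ for the von Neumann subalgebras generated by the $x_i$'s and the $y_i$'s, and let $\mathcal{A} \subseteq M_0$, $\mathcal{B} \subseteq N_0$ be the unital $*$-subalgebras they generate; these are the ranges of the evaluation $*$-homomorphisms $\mathrm{ev}_x, \mathrm{ev}_y$ from the free unital $*$-algebra $\C\la X_i, X_i^* : i \in I\ra$ sending each generator $X_i$ to $x_i$, respectively $y_i$.

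First I would check that $\ker \mathrm{ev}_x = \ker \mathrm{ev}_y$. For a $*$-polynomial $p$, faithfulness of $\tau_M$ gives $\mathrm{ev}_x(p) = 0$ iff $\tau_M\big(\mathrm{ev}_x(p^*p)\big) = 0$, since $\mathrm{ev}_x(p^*p) = \mathrm{ev}_x(p)^*\mathrm{ev}_x(p) \geq 0$. The quantity $\tau_M\big(\mathrm{ev}_x(p^*p)\big)$ is a $*$-moment of the family $(x_i)$, hence equals the corresponding moment $\tau_N\big(\mathrm{ev}_y(p^*p)\big)$ of $(y_i)$ by hypothesis, which by faithfulness of $\tau_N$ vanishes exactly when $\mathrm{ev}_y(p) = 0$. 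The kernels therefore coincide, yielding a unital $*$-isomorphism $\pi_0 \colon \mathcal{A} \to \mathcal{B}$ with $\pi_0(x_i) = y_i$; equality of the distributions also gives $\tau_N(\pi_0(a)) = \tau_M(a)$ for all $a \in \mathcal{A}$, so $\pi_0$ is trace preserving.

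Next I would pass to the GNS Hilbert spaces. Since the traces are faithful, normal and finite, $\mathcal{A}$ and $\mathcal{B}$ are dense in $\lt{M_0}$ and $\lt{N_0}$ for the inner products $\la a, b\ra = \tau_M(b^*a)$ and $\la a, b\ra = \tau_N(b^*a)$. The computation $\la \pi_0(a), \pi_0(b)\ra = \tau_N\big(\pi_0(b^*a)\big) = \tau_M(b^*a) = \la a, b\ra$ shows $\pi_0$ is inner-product preserving, so it extends to a unitary $U \colon \lt{M_0} \to \lt{N_0}$, which sends the cyclic trace vector $\hat{1}_M$ to $\hat{1}_N$.

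Finally I would transport the algebra by conjugation, realizing $M_0$ and $N_0$ through their left regular actions. Setting $\pi(a) = U a U^*$ defines a normal $*$-isomorphism of $M_0$ onto $U M_0 U^*$, and a short computation on the dense subspace $\mathcal{B}$ shows $\pi(a) = \pi_0(a) \in N_0$ for every $a \in \mathcal{A}$. The hard part will be to promote this to the von Neumann level: one must argue that $\pi(M_0) \subseteq N_0$, which I would obtain from normality of $\pi$ together with the fact that $N_0$ is weakly closed and $\mathcal{A}$ is $\sigma$-strongly dense in $M_0$, so that by Kaplansky's density theorem every element of $M_0$ is a bounded weak limit of elements of $\mathcal{A}$. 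Applying the same reasoning to $U^*$ shows $\pi$ is onto $N_0$, and since $U \hat{1}_M = \hat{1}_N$ one computes $\tau_N(\pi(a)) = \la \pi(a) \hat{1}_N, \hat{1}_N\ra = \la a \hat{1}_M, \hat{1}_M\ra = \tau_M(a)$. This produces the required trace-preserving normal $*$-isomorphism with $\pi(x_i) = y_i$.
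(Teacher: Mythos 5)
Your proof is correct. The paper in fact gives no proof of this lemma at all---it simply cites \cite[Lemma 1]{DH} and \cite[Theorem 2.14]{Knu}---and your argument (equality of the kernels of the two evaluation maps via faithfulness of the traces and equality of $*$-moments, the unitary between the GNS spaces extending the algebraic isomorphism, then conjugation together with Kaplansky density to pass from the $*$-algebras to their weak closures) is precisely the standard proof found in those references, with all the delicate points handled properly.
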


We will use the next result which is a variant of well-known Fell's
absorption principle (see \cite[Proposition 8.1]{Pis6}). This 
proposition is a substitute for the trick of G. Pisier \cite[Lemma
1.6]{Pis4}.
\begin{prop}
\label{Prop Fell absorption} 
Let $G$ be a discrete group and $1\leq
p < \infty$. Let $E$ be an operator space. If $G$ is non-amenable, we assume that $\VN(G)$ has $\QWEP$ and that $E$ is locally-$C^*(\mathbb{F}_{\infty})$ with $d_f(E)=1$. For any function $a \co G \to E$ finitely supported on $G$, we have
\begin{equation}
\label{Fell absorption}
\Bgnorm{\sum_{g \in G} \lambda_g \ot \lambda_g \ot \cdots \ot \lambda_g \ot a_g}_{L^p(\ovl{\ot}_{i=1}^n \VN(G),E)}=\Bgnorm{\sum_{g \in G} \lambda_g \ot a_g}_{L^p(\VN(G),E)}.
\end{equation}
Moreover, for any completely positive unital Fourier multiplier
$M_\varphi \co \VN(G) \to \VN(G)$ preserving the canonical trace and
any positive integer $n$ we have
\begin{equation}\label{Majoration norme multiplicateur}
\bnorm{M_\varphi^n \ot Id_E}_{L^p(\VN(G),E) \to L^p(\VN(G),E)}\leq
\norm{M_\varphi^{\ot n} \ot Id_E}_{L^p(\ovl{\otimes}_{i=1}^n
\VN(G),E) \to L^p(\ovl{\otimes}_{i=1}^n \VN(G),E)}.
\end{equation}
\end{prop}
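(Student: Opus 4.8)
The plan is to reduce both identities to the existence of a single trace-preserving $*$-monomorphism $\Phi \colon VN(G) \to \ovl{\ot}_{i=1}^n VN(G)$ sending $\lambda_g$ to the diagonal element $\lambda_g^{\ot n} := \lambda_g \ot \cdots \ot \lambda_g$, and then to pass to the vector-valued level by invoking Proposition \ref{prop-tensorisation of CP maps}.

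First I would construct $\Phi$. Equip $\ovl{\ot}_{i=1}^n VN(G)$ with the product trace $\tau_G^{\ot n}$, which is finite. Since $\lambda$ is a group homomorphism, the family $(\lambda_g^{\ot n})_{g\in G}$ satisfies $\lambda_g^{\ot n}\lambda_h^{\ot n}=\lambda_{gh}^{\ot n}$ and $(\lambda_g^{\ot n})^*=\lambda_{g^{-1}}^{\ot n}$, while $\tau_G^{\ot n}(\lambda_g^{\ot n})=\tau_G(\lambda_g)^n=\delta_{g,e}$ by (\ref{tracelambdag}). Hence $(\lambda_g^{\ot n})_{g\in G}$ has exactly the same $*$-distribution as $(\lambda_g)_{g\in G}$, and Lemma \ref{Lemma isomorphism and distributions} furnishes a trace-preserving normal $*$-isomorphism $\Phi$ of $VN(G)$ onto the subalgebra $\mathcal{N}:=W^*\{\lambda_g^{\ot n}:g\in G\}$ with $\Phi(\lambda_g)=\lambda_g^{\ot n}$. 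Being trace-preserving $*$-maps, $\Phi$, the inclusion $\iota\colon\mathcal{N}\hookrightarrow\ovl{\ot}_{i=1}^n VN(G)$, and the trace-preserving conditional expectation $\mathbb{E}\colon\ovl{\ot}_{i=1}^n VN(G)\to\mathcal{N}$ (which exists because the ambient algebra is finite) are all Markov maps. In the amenable case every algebra here is hyperfinite, so part (1) of Proposition \ref{prop-tensorisation of CP maps} applies with constant $1$; in the non-amenable case I use that QWEP is stable under finite von Neumann tensor products, that $\mathcal{N}\cong VN(G)$ is QWEP, and that $d_f(E)=1$, so part (2) applies, again with constant $1$. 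Thus $\Phi\ot Id_E$ is a surjective isometry onto $L^p(\mathcal{N},E)$, and from $(\mathbb{E}\ot Id_E)(\iota\ot Id_E)=Id_{L^p(\mathcal{N},E)}$ together with the contractivity of both factors, $\iota\ot Id_E$ is isometric. Setting $J:=\iota\circ\Phi$, the isometry $J\ot Id_E$ sends $\sum_g\lambda_g\ot a_g$ to $\sum_g\lambda_g^{\ot n}\ot a_g$, which is precisely (\ref{Fell absorption}).

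For the multiplier estimate I would exploit an intertwining relation. A direct check on generators gives $M_\varphi^{\ot n}(\lambda_g^{\ot n})=\varphi_g^n\lambda_g^{\ot n}$, so $M_\varphi^{\ot n}$ leaves $\mathcal{N}$ invariant and $M_\varphi^{\ot n}\circ J=J\circ M_\varphi^n$, where $M_\varphi^n=M_{\varphi^n}$ is again a completely positive unital trace-preserving Fourier multiplier. Since both sides are normal and agree on the generators, the relation holds on $VN(G)$, and by density of the finitely supported elements and boundedness it lifts to $(M_\varphi^{\ot n}\ot Id_E)(J\ot Id_E)=(J\ot Id_E)(M_\varphi^n\ot Id_E)$. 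Using that $J\ot Id_E$ is isometric, for any finitely supported $\xi$ one obtains
\begin{equation*}
\norm{(M_\varphi^n\ot Id_E)\xi}=\norm{(M_\varphi^{\ot n}\ot Id_E)(J\ot Id_E)\xi}\leq\norm{M_\varphi^{\ot n}\ot Id_E}\,\norm{\xi},
\end{equation*}
and taking the supremum over $\xi$ yields (\ref{Majoration norme multiplicateur}).

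The main obstacle I anticipate is the justification, in the non-amenable case, that the spaces $L^p(\ovl{\ot}_{i=1}^n VN(G),E)$ and $L^p(\mathcal{N},E)$ are well-defined and that $\Phi\ot Id_E$ is genuinely \emph{isometric} rather than merely bounded by $d_f(E)^2$: this rests on the stability of QWEP under finite tensor products, the existence of the trace-preserving conditional expectation onto $\mathcal{N}$, and the hypothesis $d_f(E)=1$ that collapses the two estimates of Proposition \ref{prop-tensorisation of CP maps} into equalities. The amenable case is comparatively routine, since hyperfiniteness is preserved under tensor products and part (1) of that proposition already provides isometries with no dependence on $E$.
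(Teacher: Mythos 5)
Your proposal is correct and takes essentially the same route as the paper: the paper likewise verifies that $(\lambda_g\ot\cdots\ot\lambda_g)_{g\in G}$ and $(\lambda_g)_{g\in G}$ have the same $*$-distribution, combines Lemma \ref{Lemma isomorphism and distributions} with Proposition \ref{prop-tensorisation of CP maps} to get (\ref{Fell absorption}), and then obtains (\ref{Majoration norme multiplicateur}) by applying (\ref{Fell absorption}) twice, which is precisely your intertwining-plus-isometry computation in different words. Your extra details (the trace-preserving conditional expectation onto $\mathcal{N}$ giving isometry of the inclusion, and the QWEP stability of the finite tensor product) merely make explicit what the paper leaves tacit.
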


\begin{proof}
Suppose that $m$ is an integer and that $g_{1},\ldots,g_{m} \in G$.
Let $\eta_1,\ldots,\eta_m\in\{*,1\}$ and
$\varepsilon_1,\ldots,\varepsilon_m\in\{-1,1\}$ the associated sign
(i.e $\varepsilon_i=-1$ if and only if $\eta_i=*$). For any integer
$n$, using (\ref{tracelambdag}), we see that
\begin{align*}
\MoveEqLeft \tau_G^{\ot n}\Big( \big(\lambda_{g_{r_1}}\ot
\cdots\ot\lambda_{g_{r_1}}\big)^{\eta_1}\cdots
\big(\lambda_{g_{r_m}}\ot
\cdots\ot\lambda_{g_{r_m}}\big)^{\eta_m}\Big)\\
&=\tau_G^{\ot n}\Big( \big(\lambda_{g_{r_1}}^{\eta_1}\ot
\cdots\ot\lambda_{g_{r_1}}^{\eta_1}\big)\cdots
\big(\lambda_{g_{r_m}}^{\eta_m}\ot
\cdots\ot\lambda_{g_{r_m}}^{\eta_m}\big)\Big)\\
 &=\tau_G\big(\lambda_{g_{r_1}^{\varepsilon_1}\cdots
g_{r_m}^{\varepsilon_m}}\big)^n\\
&=\tau_G\big(\lambda_{g_{r_1}^{\varepsilon_1}\cdots
g_{r_m}^{\varepsilon_m}}\big)\\
&=\tau_G\Big( (\lambda_{g_{r_1}}^{\eta_1}\cdots
\lambda_{g_{r_m}}^{\eta_m} \Big).
\end{align*}
We infer that the families $\big(\lambda_g \ot \cdots \ot
\lambda_g\big)_{g \in G}$ and $\big(\lambda_g\big)_{g \in G}$ have
the same $*$-distribution with respect to the von Neumann algebras
$\ovl{\ot}_{i=1}^n \VN(G)$ equipped with $\tau_G^{\ot n}$ and $\VN(G)$
equipped with $\tau_G$. We conclude by using Lemma \ref{Lemma
isomorphism and distributions} and Proposition
\ref{prop-tensorisation of CP maps}.

Now, we prove the second part of the proposition. Using (\ref{Fell
absorption}) twice, for any positive integer $n$ and any function $a
\colon G\to E$ finitely supported on $G$, we have
\begin{align*}
\MoveEqLeft  \Bgnorm{(M_\varphi^n \ot Id_E)\bigg(\sum_{g \in G}
\lambda_g \ot a_g\bigg)}_{L^p(\VN(G),E)}
= \Bgnorm{\sum_{g \in G} \varphi_g^n\lambda_g \ot a_g}_{L^p(\VN(G),E)}\\
    &= \Bgnorm{\sum_{g \in G} \varphi_g^n \lambda_g \ot \cdots \ot \lambda_g \ot a_g}_{L^p(\ovl{\ot}_{i=1}^n\VN(G),E)}\\
    &= \Bgnorm{(M_\varphi\ot \cdots \ot M_\varphi \ot Id_E)\bigg(\sum_{g \in G}  \lambda_g \ot \cdots \ot \lambda_g \ot a_g\bigg)}_{L^p(\ovl{\ot}_{i=1}^n \VN(G),E)}\\
    &\leq \Bnorm{M_\varphi \ot \cdots \ot M_\varphi \ot Id_E}_{L^p(\ovl{\ot}_{i=1}^n \VN(G),E) \to L^p(\ovl{\ot}_{i=1}^n\VN(G),E)} \Bgnorm{\sum_{g \in G}  \lambda_g \ot \cdots \ot \lambda_g \ot a_g}_{L^p(\ovl{\ot}_{i=1}^n \VN(G),E)}\\
    &= \Bnorm{M_\varphi \ot\cdots\ot M_\varphi\ot Id_E}_{L^p(\ovl{\otimes}_{i=1}^n \VN(G),E) \to L^p(\ovl{\ot}_{i=1}^n\VN(G),E)} \Bgnorm{\sum_{g \in G} \lambda_g \ot a_g}_{L^p(\VN(G),E)}.
\end{align*}
We deduce (\ref{Majoration norme multiplicateur}).
\end{proof}

\begin{prop}
\label{Prop K-convexity of Lp(M,E)} 
Let $M$ be a von Neumann. Let $E$ be a $\OK$-convex operator space and $1< p < \infty$. Suppose that one of the following assertions is true.
\begin{enumerate}
  \item $M$ is hyperfinite and equipped with a normal faithful semifinite trace.
  \item $M$ has $\QWEP$ and is equipped with a normal faithful state and $E$ is locally-$C^*(\mathbb{F}_{\infty})$.
\end{enumerate}
Then the Banach space $L^p(M,E)$ is $\K$-convex.
\end{prop}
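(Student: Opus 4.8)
The plan is to deduce everything from a single fact, namely that $OK$-convexity of $E$ forces the vector valued Schatten spaces $S^p(E)$ to be $K$-convex for every $1<p<\infty$, together with the standard stability properties of $K$-convexity. Recall (Pisier \cite{Pis4}) that $K$-convexity coincides with $B$-convexity and with the existence of a nontrivial type; in particular it is a \emph{local} property, in the sense that the $K$-convexity constant satisfies $K(X)=\sup\{K(F):F\subseteq X\text{ finite dimensional}\}$, it is stable under $(1+\varepsilon)$-isomorphism, it passes to subspaces with $K(Y)\le K(X)$ for $Y\subseteq X$, it is invariant under ultrapowers (so $K(X^{\ul})=K(X)$), and an $\ell^p$-direct sum of spaces of uniformly bounded type constant ($1<p<\infty$) again has uniformly bounded type, hence uniformly bounded $K$-convexity constant. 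Since $E$ is $OK$-convex, $S^2(E)$ is $K$-convex, and by preservation of $K$-convexity under complex interpolation \cite{Pis1} so is $S^p(E)$ for all $1<p<\infty$; write $\kappa=K(S^p(E))<\infty$.

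First I would treat assertion (2). Since $M$ has $QWEP$ and $E$ is locally-$C^*(\mathbb{F}_\infty)$, fixing a representation $\beta=(\pi,\mathbb{E},\ul,I)$ as in Proposition \ref{prop Carac des QWEP} realizes $L^p(M,E)$ as a closed subspace of the ultrapower $S^p_I(E)^{\ul}$. Every finite dimensional subspace of $S^p_I(E)$ is, up to an arbitrarily small perturbation, supported on finitely many coordinates, i.e.\ contained in some $S^p_n(E)$, which is a $1$-complemented subspace of $S^p(E)$; hence $K(S^p_I(E))\le\kappa$. Using invariance under ultrapowers and passage to subspaces I get $K(L^p(M,E))\le K\big(S^p_I(E)^{\ul}\big)=K(S^p_I(E))\le\kappa<\infty$, so $L^p(M,E)$ is $K$-convex.

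For assertion (1) there is no ambient ultrapower to exploit, so I would argue by finite dimensional approximation. Since $M$ is hyperfinite, it is the weak$^*$-closure of an increasing net of finite dimensional $*$-subalgebras $M_n$, each carrying a trace preserving normal conditional expectation $\mathbb{E}_n\colon M\to M_n$. By Proposition \ref{prop-tensorisation of CP maps}(1), $\mathbb{E}_n\ot \mathrm{Id}_E$ extends to a contractive projection of $L^p(M,E)$ onto $L^p(M_n,E)$, and $\bigcup_n L^p(M_n,E)$ is dense in $L^p(M,E)$. Each finite dimensional $M_n$ is a direct sum of matrix algebras, so $L^p(M_n,E)$ is the corresponding $\ell^p$-direct sum of spaces $S^p_{k_j}(E)$, each of which is $1$-complemented in $S^p(E)$ and therefore of type constant bounded in terms of $\kappa$ only. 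By the $\ell^p$-stability of type, $\sup_n K(L^p(M_n,E))<\infty$. Finally, the locality of $K$-convexity together with the density of $\bigcup_n L^p(M_n,E)$ yields $K(L^p(M,E))=\sup_n K(L^p(M_n,E))<\infty$.

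The conceptual content is entirely contained in the first paragraph; the only points requiring care are the uniform control of constants. The main technical obstacle is the $\ell^p$-direct sum step in Case (1): one must ensure that the $K$-convexity constants of the blocks $S^p_{k_j}(E)$ depend neither on $k_j$ nor on $n$, which is exactly why I pass through type, whose constant is inherited by $1$-complemented subspaces and is stable under $\ell^p$-sums for $1<p<\infty$, rather than trying to estimate the Rademacher projection on the sum directly. A secondary point is that $K$-convexity is usually phrased for real scalars, but as recalled after Lemma \ref{Lemma of Pisier on projections} the containment of $\ell^1_n$'s is insensitive to realification, so the complex statement follows.
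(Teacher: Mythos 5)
Your case (2) is the paper's own argument: the paper likewise realizes $L^p(M,E)$ as a closed subspace of the ultrapower $S^p_I(E)^{\ul}$ coming from Proposition \ref{prop Carac des QWEP} and concludes from the fact that $K$-convexity is a super-property, i.e.\ passes to closed subspaces of ultrapowers (\cite[page 261]{DJT}); your locality argument reducing $K(S^p_I(E))$ to $K(S^p(E))$ merely fills in a step the paper asserts in one line (``Hence the Banach space $S^p_I(E)$ is also $K$-convex''). Where you genuinely depart from the paper is case (1): the paper disposes of it as ``similar, using \cite[Theorem 3.4]{Pis5} instead of ultraproducts'', whereas you build a self-contained approximation argument (density of $\bigcup_n L^p(M_n,E)$, identification of each $L^p(M_n,E)$ with an $\ell^p$-sum of spaces $S^p_{k_j}(E)$, and uniform control of constants by passing through type). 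The detour through type is legitimate: by Pisier's theorem and an ultraproduct argument, the $K$-convexity constant and the type constants control each other quantitatively, and type is stable under $\ell^p$-sums, so this is a sound way to get the uniformity that a direct estimate of the Rademacher projection on the sum would not give.

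However, your case (1) has a genuine gap in the semifinite, non-finite situation, which assertion (1) explicitly allows. If $\tau(1_M)=\infty$ (e.g.\ $M=B(\ell^2)$ with its canonical trace), there is \emph{no} trace-preserving normal conditional expectation onto a unital finite-dimensional subalgebra $M_n$: trace preservation would force $\tau|_{M_n}$ to be a faithful normal semifinite trace on $M_n$, and a faithful semifinite trace on a finite-dimensional algebra is finite, contradicting $\tau(1_{M_n})=\tau(1_M)=\infty$. Consequently Proposition \ref{prop-tensorisation of CP maps}(1), which you invoke and which is stated for unital trace-preserving maps, does not apply, and both the $1$-complementation of $L^p(M_n,E)$ and your derivation of the density claim are left without justification. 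The repair is standard but must be made explicit: take an increasing net of finite-dimensional subalgebras $M_\alpha$ that are \emph{not} unital in $M$, whose units $e_\alpha$ have finite trace and increase strongly to $1$; the maps $x\mapsto \mathbb{E}_\alpha(e_\alpha x e_\alpha)$ (conditional expectation computed inside the finite corner $e_\alpha M e_\alpha$) are normal completely positive complete contractions on $L^p(M)$ converging strongly to the identity, and they tensorize contractively with $Id_E$ by Pisier's regular-operator results (\cite{Pis2}, \cite{Pis5}) rather than by Proposition \ref{prop-tensorisation of CP maps}(1); with these substitutions your density-plus-locality scheme goes through. In the finite-trace case your argument is correct as written; the repaired general case is in substance what the paper's citation of \cite[Theorem 3.4]{Pis5} encapsulates.
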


\begin{proof}
We begin with the second case. By definition, the Banach space
$L^p(M,E)$ is a closed subspace of a ultrapower $S^p_I(E)^\ul$ of a
vector valued Schatten space $S^p_I(E)$ for some index set $I$. The
Banach space $S^p(E)$ is K-convex. Hence the Banach space
$S^p_I(E)$ is also K-convex. Recall that K-convexity is a
super-property \cite[page 261]{DJT}, i.e. passes from a Banach space
to all closed subspaces of its ultrapowers. We conclude that the
Banach space $L^p(M,E)$ is K-convex. The first case is similar
using \cite[Theorem 3.4]{Pis5} instead of ultraproducts.
\end{proof}

Now, we are ready to give the proof of Theorem \ref{Th Analyticity
Fourier multipliers}.

\begin{proof}[of Theorem \ref{Th Analyticity Fourier multipliers}]
By \cite[pages 4371]{Ric} and the proof of \cite[Theorem 5.3]{HaM}, for any $t\geq 0$, the operator $T_{t}=(T_{\frac{t}{2}})^2$ admits a Rota dilation (see \cite[Definition 5.1]{HaM} and \cite[page 124]{JMX} for a precise definition)
$$
T_t^k=Q\mathbb{E}_k\pi,\qquad k \geq 1
$$
where $Q \co M \to \VN(G)$ and $\pi \co \VN(G) \to M$ and where we use a crossed product
$$
M=\Gamma_{-1}\big(\ell^{2,T}\ot_2 \ell^2\big)\rtimes_\alpha G
$$
equipped with its canonical trace. We infer that
$$
Id_{L^p(\N(G))}-T_t=Q(Id_{L^p(M)}-\mathbb{E}_1)\pi.
$$
Let $n$ be a positive integer. We deduce that
$$
(Id_{L^p(\VN(G))}-T_t)^{\ot n}=Q^{\ot n}(Id_{L^p(M)}-\mathbb{E}_1)^{\ot n}\pi^{\ot
n}.
$$
For any integer $1 \leq j \leq n$, we let
$$
\Pi_j=Id_{L^p(M)} \ot \cdots \ot Id_{L^p(M)} \ot \mathbb{E}_1 \ot
Id_{L^p(M)} \ot \cdots \ot Id_{L^p(M)}.
$$
Recall that the fermion algebra $\Gamma_{-1}\big(\ell^{2,T} \ot_2 \ell^{2}\big)$ is $*$-isomorphic to the hyperfinite factor of type $\rm{II}_1$. Moreover, if $G$ is amenable then by \cite[Proposition 6.8]{Con}, $M$ is hyperfinite. If $G=\mathbb{F}_n$, by \cite[Proposition 4.8]{Arh}, the von Neumann algebra $M$ has QWEP. By Proposition \ref{prop-tensorisation of CP maps}, we deduce that the $\Pi_j \ot Id_E$'s are well-defined and form a family of mutually commuting contractive projections on $L^p(M,E)$. Moreover, we have
$$
\big((Id_{L^p(M)}-\mathbb{E}_1)\big)^{\ot n} \ot Id_E=\prod_{1 \leq
j \leq n} \Big(Id_{L^p(M^{\ot n},E)}-(\Pi_j\ot Id_E)\Big).
$$
Using (\ref{Majoration norme multiplicateur}), Proposition
\ref{prop-tensorisation of CP maps} and Lemma \ref{Lemma of Pisier
on projections}, we obtain that
\begin{align*}
\MoveEqLeft \Bnorm{\big(Id_{L^p(\VN(G))}-T_t\big)^n\ot
Id_E}_{L^p(\VN(G),E) \to L^p(\VN(G),E)}\\
    & \leq \Bnorm{(Id_{L^p(\VN(G))}-T_t)^{\ot n}\ot Id_E}_{L^p(\ovl{\ot}_{i=1}^n\VN(G),E) \to L^p(\ovl{\ot}_{i=1}^nVN(G),E)}\\
    &= \Bnorm{Q^{\ot n}(Id_{L^p(M)}-\mathbb{E}_1)^{\ot n}\pi^{\ot n} \ot Id_E}_{L^p(\ovl{\ot}_{i=1}^n\VN(G),E) \to L^p(\ovl{\ot}_{i=1}^n\VN(G),E)}\\
    & \leq \Bnorm{(Id_{L^p(M)}-\mathbb{E}_1)^{\ot n}\ot Id_E}_{L^p(M,E) \to L^p(M,E)}\\
    & = \Bgnorm{\prod_{1\leq j\leq n}  \Big(Id_{L^p(M^{\ot n},E)}-(\Pi_j \ot Id_E)\Big)}_{L^p(M^{\ot n},E) \to L^p(M^{\ot n},E)}\\ 
    & \leq \rho^n.
\end{align*}
We conclude by applying Theorem \ref{Théorème de Beurling}. 
\end{proof}

Now, we pass to general semigroups on QWEP von Neumann algebras.
Note that the class of operators spaces considered in the following
theorem is included in the class of OK-convex operator spaces.
\begin{thm}
\label{Th Analyticity general semigroups} Let $M$ be a von Neumann
algebra with $\QWEP$ equipped with a normal faithful state. Let $(T_t)_{t \geq 0}$ be a $w^*$-continuous semigroup of $\QWEP$-factorizable maps on $M$. Suppose $1<p<\infty$. Let $E$ be a locally-$C^*(\mathbb{F}_{\infty})$ operator space with $d_f(E)=1$ such that $S^p(E)$ does not contain, for some $\lambda >1$, any subspace $\lambda$-isomorphic to $\ell_1^2$. Then $(T_t \ot Id_E)_{t\geq 0}$ defines a strongly continuous bounded analytic semigroup on the Banach space $L^p(M,E)$.
\end{thm}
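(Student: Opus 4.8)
The plan is to apply Beurling's Theorem~\ref{Th�or�me de Beurling} with $n=1$, so that it suffices to exhibit a single constant $\rho<2$, independent of $t$, with $\bnorm{Id_{L^p(M,E)}-T_t\ot Id_E}_{L^p(M,E)\to L^p(M,E)}\leq\rho$ for every $t>0$. First I would dispose of the standing hypotheses of Beurling's theorem: since each $T_t$ is a $\phi$-Markov map and $d_f(E)=1$, Proposition~\ref{prop-tensorisation of CP maps}(2) shows that each $T_t\ot Id_E$ is a contraction on $L^p(M,E)$, while the $w^*$-continuity of $(T_t)_{t\geq 0}$ together with the density of $M\ot E$ in $L^p(M,E)$ gives strong continuity of $(T_t\ot Id_E)_{t\geq 0}$. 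Thus $(T_t\ot Id_E)_{t\geq 0}$ is a strongly continuous contraction semigroup, and only the uniform estimate above remains.

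The heart of the matter is to realize $Id-T_t$ through a genuine norm-one projection, so that Lemma~\ref{Lemma of Pisier on projections} can be applied with $n=1$. Using the $QWEP$-factorization of Definition~\ref{Def QWEP-factorisable}, write $T_t=J_0^*\circ J_1$ with $J_0,J_1\colon M\to P$ Markov $*$-monomorphisms into a $QWEP$ algebra $P$, and let $\mathbb{E}_0=J_0J_0^*$ be the canonical state-preserving conditional expectation onto $J_0(M)$, which is a norm-one projection on $L^p(P)$. Since $J_1$ is a Markov monomorphism we have $J_1^*J_1=Id_M$, and therefore $Id_M-T_t=J_1^*\big(Id_P-\Psi_t\big)J_1$ with $\Psi_t=J_1J_0^*$. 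Because $J_1\ot Id_E$ and $J_1^*\ot Id_E$ are contractions by Proposition~\ref{prop-tensorisation of CP maps}(2) (here $d_f(E)=1$), the estimate reduces to bounding $\bnorm{(Id_P-\Pi_t)\ot Id_E}_{L^p(P,E)\to L^p(P,E)}$, where $\Pi_t$ must be an honest conditional expectation. Passing to $T_t=(T_{t/2})^2$ and using that the self-adjoint positive operator $T_{t/2}^*T_{t/2}$ takes the form $J_1^*\mathbb{E}_0J_1$, one obtains exactly $Id-T_{t/2}^*T_{t/2}=J_1^*(Id_P-\mathbb{E}_0)J_1$, i.e. a Markov map composed with $Id$ minus the \emph{projection} $\mathbb{E}_0$ composed with a Markov map.

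With the operator presented in this sandwiched form, I would invoke Lemma~\ref{Lemma of Pisier on projections}: applied to the single commuting norm-one projection $\mathbb{E}_0\ot Id_E$ on $X=L^p(P,E)$, it produces $0<\rho<2$ with $\bnorm{(Id_P-\mathbb{E}_0)\ot Id_E}\leq\rho$, whence $\bnorm{Id-T_t\ot Id_E}\leq\rho<2$ uniformly in $t$, and Beurling's Theorem~\ref{Th�or�me de Beurling} finishes the proof. To legitimately apply Pisier's lemma I must check that $L^p(P,E)$ inherits the geometric hypothesis: $S^p(E)$ contains no subspace $\lambda$-isomorphic to $\ell^1_2$, this is a finite-dimensional local condition, hence stable under the index set (so $S^p_I(E)$ has it), under ultrapowers (so $S^p_I(E)^{\ul}$ has it), and under passage to closed subspaces (so $L^p(P,E)$ has it); the real/complex equivalence for uniform containment of $\ell^1_n$'s noted before the lemma lets one work with the real structure required there. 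The main obstacle is the projection realization itself: the raw factorized operator $\Psi_t=J_1J_0^*$ is only a unital completely positive contraction, not a conditional expectation, so the delicate step is to promote it to (or dominate it by) a genuine state-preserving conditional expectation $\Pi_t$ while retaining a bound valid \emph{uniformly in $t$}, which is where the self-adjoint positive operator $T_{t/2}^*T_{t/2}$ and the intermediate $QWEP$ algebra of the factorization must be used in place of the Rota dilation and Fell absorption exploited in the group case.
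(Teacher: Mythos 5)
Your proposal follows, at bottom, the same strategy as the paper's own proof: Beurling's theorem with $n=1$, Lemma \ref{Lemma of Pisier on projections} with $n=1$ applied to a single norm-one projection of conditional-expectation type, tensorization via Proposition \ref{prop-tensorisation of CP maps}(2) with $d_f(E)=1$, and transfer of the $\ell^1_2$-non-containment hypothesis to the relevant vector-valued $L^p$-space. The genuine difference is how the sandwich identity is obtained. The paper quotes \cite[Theorem 5.3]{HaM} to produce a full Rota dilation $T_t^k=Q\mathbb{E}_k\pi$, $k\geq 1$, through a QWEP algebra, of which only the case $k=1$ is then used; you instead extract the one-step identity directly from Definition \ref{Def QWEP-factorisable}: from $T_{t/2}=J_0^*J_1$ you get $T_{t/2}^*T_{t/2}=J_1^*\mathbb{E}_0J_1$ with $\mathbb{E}_0=J_0J_0^*$ the state-preserving conditional expectation onto $J_0(M)$, hence $Id_M-T_{t/2}^*T_{t/2}=J_1^*(Id_P-\mathbb{E}_0)J_1$. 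This is precisely the $k=1$ content of the Haagerup--Musat construction, so your route is more elementary and self-contained (none of the machinery behind the full Rota dilation is needed), at the price of re-deriving what the citation gives for free. You are also more careful than the paper on two points it leaves implicit: that non-containment of $\ell^1_2$ passes to $L^p(P,E)\subset S^p_I(E)^\ul$ by finite representability and passage to subspaces, and that the constant is uniform in $t$ even though the factorization algebra $P=P_t$ varies; the latter holds because your transfer argument yields a single $\lambda'>1$ valid for every $P_t$, and Pisier's lemma gives $\rho$ depending only on $\lambda'$ (and $n=1$), a point worth stating explicitly.

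One caveat, which you share with the paper rather than introduce: the passage from $\bnorm{(Id_P-\mathbb{E}_0)\ot Id_E}\leq\rho$ to $\bnorm{(Id_M-T_t)\ot Id_E}\leq\rho$ uses $T_t=T_{t/2}^*T_{t/2}$, i.e.\ the self-adjointness of $T_{t/2}$, which is not among the stated hypotheses. The paper's appeal to \cite[Theorem 5.3]{HaM} conceals exactly the same requirement: any map admitting a Rota dilation equals $(\mathbb{E}_1\pi)^*(\mathbb{E}_1\pi)$, hence is automatically self-adjoint and positive as an operator on $L^2(M)$, so that theorem yields Rota dilations only for maps of the form $S^*S$ with $S$ factorizable, and applying it to $T_t=T_{t/2}T_{t/2}$ again needs $T_{t/2}=T_{t/2}^*$. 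Thus your argument is correct exactly to the extent that the paper's is; to close the statement as written, self-adjointness of the $T_t$ (satisfied in all of the paper's examples) must be added or otherwise justified.
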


\begin{proof}
Using the fact that each $T_t$ is QWEP-factorizable and
\cite[Theorem 5.3]{HaM}, it is easy to see that each $T_t$ admits a
Rota dilation
$$
T_t^k=Q\mathbb{E}_k\pi,\qquad k \geq 1
$$
where $Q \co N \to M$ and $\pi \co M \to N$ where $N$ is a
QWEP von Neumann algebra. The rest of the proof is similar to the
one of Theorem \ref{Th Analyticity Fourier multipliers}. The only
difference is that we does not use Proposition \ref{Prop Fell
absorption}. We use Lemma \ref{Lemma of Pisier on projections} with
$n=1$. 
\end{proof}

For instance, if $S^p(E)$ (where $1 <p<\infty$) is uniformly convex
then for some $\lambda>1$, $S^p(E)$ does not contain any subspace
$\lambda$-isomorphic to $\ell^1_2$.


Suppose $1<p,q<\infty$. Note that if an operator space $E$ is $\OUMD_q$ then the Banach space $S^q(E)$ is UMD, hence isomorphic to a uniformly convex Banach space. Now, we can write $\frac{1}{p}=\frac{1-\alpha}{q}+\frac{\alpha}{r}$ for some $0 <\alpha <1$ and some $1 < r < \infty$. Then we have $S^p(E)=\big(S^q(E),S^r(E)\big)_\alpha$. Now recall that, by \cite{CwR}, if $(X_0,X_1)$ is a compatible couple of Banach spaces, one of which is uniformly convex, then for any $0<\alpha<1$ the complex interpolation $(X_0,X_1)_\alpha$ is also uniformly convex. We deduce that the Banach space $S^p(E)$ is isomorphic to a uniformly convex space. Hence, we can use in Theorem \ref{Th Analyticity general semigroups} (and also in Theorem \ref{Th Analyticity Fourier multipliers}) any operator space $E$ which is $\OUMD_q$ for \textit{some} $1<q<\infty$ and locally-$C^*(\mathbb{F}_{\infty})$ with $d_f(E)=1$. This large class contains in particular Schatten spaces $S^q$ and commutative $L^q$-spaces for any $1 <q <\infty$.

Finally, we deal with semigroups of Schur multipliers. Note that the
construction of the Rota Dilation for Schur multipliers on finite
dimensional $B(\ell^2_I)$ of \cite{Ric} is actually true for any
index set $I$. Moreover, the von Neumann algebra $\Gamma_{-1}^{e}(\ell^{2,T})$ of \cite{Ric} is hyperfinite. Hence, the von Neumann algebra
$$
M=B\big(\ell^2_I\big) \otvn \Bigg(\ovl{\bigotimes_{n\in \mathbb{N}}}
\big(\Gamma_{-1}^e\big(\ell^{2,T}\big),\tau\big)\Bigg).
$$
of the Rota Dilation constructed with \cite[pages 4369-4370]{Ric} and the proof of \cite[Theorem 5.3]{HaM} is also hyperfinite. Using the above ideas, one can prove the following theorem.
\begin{thm}
\label{Th Analyticity semigroups Schur multipliers} 
Let $(T_t)_{t\geq 0}$ be a $w^*$-continuous semigroup of completely
positive unital selfadjoint Schur multipliers on $B\big(\ell^2_I\big)$. Suppose $1<p<\infty$. Let $E$ be an operator space such that $S^p(E)$ does not contain, for some $\lambda >1$, any subspace $\lambda$-isomorphic to $\ell_1^2$. Then $(T_t \ot Id_E)_{t \geq 0}$ defines a strongly continuous bounded analytic semigroup on the Banach space $S^p_I(E)$.
\end{thm}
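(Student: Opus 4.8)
The plan is to follow the proof of Theorem \ref{Th Analyticity general semigroups} almost verbatim, feeding in the Rota dilation for Schur multipliers recalled just above the statement in place of the one for $QWEP$-factorizable maps. Concretely, fix $t > 0$. By \cite{Ric} (whose construction, as noted, is valid for an arbitrary index set $I$), the map $T_t$ admits a Rota dilation
$$
T_t^k = Q\,\mathbb{E}_k\,\pi, \qquad k \geq 1,
$$
where $\pi \colon B(\ell^2_I) \to M$ is a trace-preserving $*$-monomorphism into the hyperfinite algebra $M = B(\ell^2_I) \otvn R$ with $R = \ovl{\bigotimes_{n \in \mathbb{N}}}\big(\Gamma_{-1}^e(\ell^{2,T}),\tau\big)$ the hyperfinite factor of type ${\rm II}_1$ appearing above, $Q = \pi^*$ is its adjoint (a trace-preserving normal unital completely positive map, namely the canonical conditional expectation onto the copy of $B(\ell^2_I)$ inside $M$), and the $\mathbb{E}_k$ are conditional expectations onto a decreasing filtration. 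Since $Q\pi = Id$, taking $k = 1$ yields $Id - T_t = Q(Id_{L^p(M)} - \mathbb{E}_1)\pi$.

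Because $M$ is hyperfinite, I can invoke Proposition \ref{prop-tensorisation of CP maps}(1), which applies with constant $1$ and, crucially, imposes no condition on $E$: the maps $\pi \ot Id_E$ and $Q \ot Id_E$ are contractions between $S^p_I(E) = L^p(B(\ell^2_I),E)$ and $L^p(M,E)$, while $\Pi := \mathbb{E}_1 \ot Id_E$ is a norm-one projection on $L^p(M,E)$. Tensoring the identity above with $Id_E$ and composing therefore gives
$$
\bnorm{(Id - T_t) \ot Id_E}_{S^p_I(E) \to S^p_I(E)} \leq \bnorm{Id_{L^p(M,E)} - \Pi}_{L^p(M,E) \to L^p(M,E)},
$$
and it remains to bound the right-hand side by a constant $\rho < 2$ independent of $t$. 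This is exactly Lemma \ref{Lemma of Pisier on projections} applied to the single commuting norm-one projection $\Pi$ with $n = 1$, provided $L^p(M,E)$ contains, for some $\lambda > 1$, no subspace $\lambda$-isomorphic to $\ell^1_2$.

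The step I expect to require the most care is precisely this transfer of the $\ell^1_2$ condition from the hypothesis on $S^p(E)$ to the dilation space $L^p(M,E)$. I would argue that the property ``for some $\lambda > 1$ the space contains no subspace $\lambda$-isomorphic to $\ell^1_2$'' is a super-property: containing $\ell^1_2$ $\lambda$-isomorphically is witnessed by two vectors, so this condition is inherited by subspaces and passes between a space and its ultrapowers with the same constant, hence is stable under finite representability. Since $M$ is hyperfinite, $L^p(M,E)$ is finitely representable in the matricial spaces $S^p_n(E)$ (equivalently, it embeds into an ultraproduct of such spaces, via \cite[Theorem 3.4]{Pis5}, exactly as in the first case of Proposition \ref{Prop K-convexity of Lp(M,E)}). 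As all the $S^p_n(E)$ share the $\ell^1_2$-constant of $S^p(E)$, the hypothesis propagates to $L^p(M,E)$, and Lemma \ref{Lemma of Pisier on projections} delivers $\bnorm{Id_{L^p(M,E)} - \Pi} \leq \rho < 2$.

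Finally, I would record that $T_t \ot Id_E$ is itself a contraction on $S^p_I(E)$, again by Proposition \ref{prop-tensorisation of CP maps}(1), since a unital selfadjoint completely positive Schur multiplier preserves the trace, and that $(T_t \ot Id_E)_{t \geq 0}$ is strongly continuous by the standard argument from the $w^*$-continuity of $(T_t)_{t \geq 0}$ together with uniform boundedness (checking continuity on the dense subspace of finite-rank tensors). With $\bnorm{(Id - T_t) \ot Id_E} \leq \rho < 2^1$ for every $t > 0$, Theorem \ref{Th�or�me de Beurling} with $n = 1$ shows that $(T_t \ot Id_E)_{t \geq 0}$ is a bounded analytic semigroup on $S^p_I(E)$, as claimed.
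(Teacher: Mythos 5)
Your proposal is correct and is essentially the paper's own argument: the paper only states that the theorem follows ``using the above ideas'', meaning exactly what you wrote --- Ricard's Rota dilation (valid for arbitrary $I$, with hyperfinite dilation algebra $M$), Proposition \ref{prop-tensorisation of CP maps}(1), Lemma \ref{Lemma of Pisier on projections} with $n=1$, and Theorem \ref{Th�or�me de Beurling}, following the proof of Theorem \ref{Th Analyticity general semigroups}. Your explicit treatment of the transfer of the $\ell^1_2$-condition from $S^p(E)$ to $L^p(M,E)$ as a super-property, via \cite[Theorem 3.4]{Pis5} as in Proposition \ref{Prop K-convexity of Lp(M,E)}, is precisely the step the paper leaves implicit.
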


\begin{remark}
In \cite[Proposition 5.4]{Arh}, the author gives a concrete description of the semigroups of the above theorem.
\end{remark}

\section{$R$-analyticity}

Let $(\varepsilon_k)_{k \geq 1}$ be a sequence of independent
Rademacher variables on some probability space
$(\Omega,\mathcal{F},\mathbb{P})$. We let $\Rad(X)\subset
L^2(\Omega,X)$ be the closure of ${\rm
Span}\bigl\{\varepsilon_k\otimes x\, :\, k\geq 1,\ x\in X\bigr\}$ in
the Bochner space $L^2(\Omega,X)$. Thus for any finite family
$x_1,\ldots,x_n$ in $X$, we have
$$
\Bgnorm{\sum_{k=1}^{n} \varepsilon_k\otimes x_k}_{\Rad(X)} \,=\,
\Biggr(\int_{\Omega}\bgnorm{\sum_{k=1}^{n} \varepsilon_k(\omega)\,
x_k}_{X}^{2}\,d\omega\,\Biggr)^{\frac{1}{2}}.
$$
We say that a set $F \subset B(X)$ is $R$-bounded provided that there
is a constant $C \geq 0$ such that for any finite families
$T_1,\ldots, T_n$ in $F$ and  $x_1,\ldots,x_n$ in $X$, we have
$$
\Bgnorm{\sum_{k=1}^{n} \varepsilon_k\ot T_k (x_k)}_{\Rad(X)}\,\leq\,
C\, \Bgnorm{\sum_{k=1}^{n} \varepsilon_k \ot x_k}_{\Rad(X)}.
$$
In this case, we let $R(F)$ denote the smallest possible $C$, which
is called the $R$-bound of $F$. Note that any singleton $\{T\}$ is
automatically $R$-bounded with $R\big(\{T\}\big)=\norm{T}_{X \to
X}$. If a set $F$ is $R$-bounded then the strong closure of $F$ is
$R$-bounded (with the same $R$-bound). Recall that if $H$ is a
Hilbert space, a subset $F$ of $B(H)$ is bounded if and only if $F$
is $R$-bounded.

$R$-boundedness was introduced in \cite{BG} and then developed in
the fundamental paper \cite{ClP}. We refer to the latter paper and
to \cite[Section 2]{KW} for a detailed presentation.

The next result is a noncommutative version of the classical result
of Bourgain \cite{Bou} which itself is a vector valued generalization of a result of Stein \cite{Ste}.

\begin{prop}
\label{Prop noncommutative Stein on conditional expectations}Suppose
$1<p<\infty$. Let $E$ be an operator space and let $M$ be a von Neumann algebra. Assume one of the following assertions.
\begin{enumerate}
  \item $M$ is hyperfinite and equipped with a normal faithful semifinite trace and $E$ is
  $\OUMD_p$.
  \item $M$ has $\QWEP$ and is equipped with a normal faithful state and $E$ is
  $\OUMD_p'$.
\end{enumerate}
Consider a increasing (or decreasing) sequence
$\big(\mathbb{E}(\cdot|M_i) \big)_{i \in \mathbb{N}}$ of (canonical)
conditional expectations on some von Neumann subalgebras $M_i$ of
$M$. Then the set of conditional expectation operators  $\big\{\mathbb{E}(\cdot|M_i) \ot Id_{E}\ \colon\ i \in \mathbb{N}\big\}$ is $R$-bounded on $L^p(M,E)$.
\end{prop}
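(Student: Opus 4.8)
The plan is to realise the Rademacher-weighted sum of conditional expectations as a martingale transform with coefficients in $\{0,1\}$ on a suitable tensor von Neumann algebra, and then to invoke the $OUMD_p$ (resp. $OUMD_p'$) property of $E$, which is precisely a bound on such transforms. First I would reduce the statement. By the exchangeability of the Rademacher variables we may permute the terms, so to control $\Bgnorm{\sum_k \varepsilon_k \ot \mathbb{E}(\cdot|M_{i_k})(x_k)}_{\Rad(L^p(M,E))}$ it suffices to treat the diagonal case of a finite nested family $M_1 \subseteq \cdots \subseteq M_n$ (reordering the subalgebras $M_{i_k}$ by inclusion; the decreasing case is handled by reversing the order) and to prove
\[
\Bgnorm{\sum_{k=1}^n \varepsilon_k \ot \mathbb{E}(\cdot|M_k)(f_k)}_{\Rad(L^p(M,E))} \leq C \Bgnorm{\sum_{k=1}^n \varepsilon_k \ot f_k}_{\Rad(L^p(M,E))}.
\]
Writing $\mathbb{E}_k = \mathbb{E}(\cdot|M_k)$ and applying the contractive conditional expectation $\mathbb{E}_n \ot Id_E$ (Proposition \ref{prop-tensorisation of CP maps}) to both sides, I may assume $f_k \in L^p(M_n,E)$. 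Finally, by the Kahane--Khintchine inequalities the $\Rad(L^p(M,E))$-norm of a Rademacher sum is equivalent, with constants depending only on $p$, to its norm in $L^p\big(L^\infty(\Omega) \otvn M, E\big) = L^p\big(\Omega; L^p(M,E)\big)$ for $\Omega = \{-1,1\}^n$; so it is enough to work in $L^p(\mathcal{N},E)$, where $\mathcal{N} = L^\infty(\Omega) \otvn M$.

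Next I would build an interleaved filtration on $\mathcal{N}$. Let $\A_k \subseteq L^\infty(\Omega)$ be the subalgebra generated by $\varepsilon_1,\ldots,\varepsilon_k$ (with $\A_0 = \C$) and consider the increasing sequence of subalgebras
\[
\A_0 \otvn M_1 \subseteq \A_1 \otvn M_1 \subseteq \A_1 \otvn M_2 \subseteq \A_2 \otvn M_2 \subseteq \cdots \subseteq \A_n \otvn M_n,
\]
whose associated state- (resp. trace-) preserving conditional expectations are the tensor maps $\mathbb{E}(\cdot|\A_k) \ot \mathbb{E}_\ell$; these are contractive on $L^p(\mathcal{N},E)$ by Proposition \ref{prop-tensorisation of CP maps}. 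A direct computation using $\mathbb{E}(\varepsilon_j|\A_k) = \varepsilon_j$ for $j \leq k$ and $\mathbb{E}(\varepsilon_j|\A_k)=0$ for $j>k$ shows that, for $g = \sum_k \varepsilon_k \ot f_k$, each ``$\Omega$-refinement'' step $\A_{k-1}\otvn M_k \to \A_k \otvn M_k$ produces the martingale difference $\varepsilon_k \ot \mathbb{E}_k(f_k)$, whereas each ``$M$-refinement'' step $\A_k \otvn M_k \to \A_k \otvn M_{k+1}$ contributes $\sum_{j\leq k}\varepsilon_j \ot (\mathbb{E}_{k+1}-\mathbb{E}_k)(f_j)$. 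Hence $g$ is the sum of all these differences, and $\sum_k \varepsilon_k \ot \mathbb{E}_k(f_k)$ is obtained from $g$ by the martingale transform that keeps the $\Omega$-steps and annihilates the $M$-steps.

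To finish, I would note that this $\{0,1\}$-transform equals $\tfrac12(Id + S)$, where $S$ is the martingale transform multiplying the $\Omega$-steps by $+1$ and the $M$-steps by $-1$. In Case (1) the algebra $\mathcal{N}$ is hyperfinite and $E$ is $OUMD_p$; in Case (2) $\mathcal{N}$ is QWEP (a von Neumann tensor product of QWEP algebras) with a normal faithful state and $E$ is $OUMD_p'$. In either case the defining inequality (\ref{defumd}), applied to the interleaved martingale difference sequence and the above choice of signs, bounds $\norm{S}_{L^p(\mathcal{N},E) \to L^p(\mathcal{N},E)}$ by the $OUMD_p$ (resp. $OUMD_p'$) constant $C$, so the projection onto the $\Omega$-steps has norm at most $\tfrac{1+C}{2}$. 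Combined with the reductions above, this yields the asserted $R$-bound.

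The main obstacle is the bookkeeping that makes the construction fit the hypotheses of the $OUMD_p$/$OUMD_p'$ definitions: verifying that $\mathcal{N}$ together with the interleaved filtration is of the required type. In particular, Case (1) must be reduced to a \emph{finite} hyperfinite algebra (the definition of $OUMD_p$ requires a finite trace, while $M$ is only semifinite), which I would do by a standard localisation, and in Case (2) one must check that the tensor subalgebras $\A_k \otvn M_\ell$ are globally invariant under the modular group of the product state, so that the canonical state-preserving conditional expectations exist and extend contractively to $L^p(\mathcal{N},E)$. The remaining delicate point is the clean verification that the $\Omega$-refinement differences isolate precisely $\varepsilon_k \ot \mathbb{E}_k(f_k)$, which is what forces the two-speed interleaving of the filtration.
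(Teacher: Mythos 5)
Your proof is correct and takes essentially the same route as the paper: the same two-speed interleaved filtration $\A_0\otvn M_1 \subseteq \A_1\otvn M_1 \subseteq \A_1 \otvn M_2 \subseteq \cdots$ on $L^\infty(\Omega)\otvn M$, the same identification of $\sum_k \varepsilon_k \ot \mathbb{E}_k(f_k)$ with the sum of the ``$\Omega$-step'' martingale differences, the same use of the $OUMD_p$ (resp.\ $OUMD_p'$) inequality as a $\pm 1$ martingale-transform bound (the paper writes your $\tfrac12(Id+S)$ as $\tfrac12\bigl(\sum_m d_m + \sum_m (-1)^m d_m\bigr)$), and the same Kahane--Khintchine conversion between the $\Rad$-norm and the $L^p\bigl(L^\infty(\Omega)\otvn M,E\bigr)$-norm, with the semifinite case of~(1) handled by a reduction (the paper invokes the Haagerup reduction method of \cite{HJX}). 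Incidentally, your computation of the ``$M$-refinement'' differences is more careful than the paper's, which asserts $d_{2k-1}=0$; that claim is false in general, but harmless, since the argument only uses the identification of the even-indexed differences and the telescoping identity.
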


\begin{proof}
We only prove the decreasing case $M_1 \supset M_2 \supset \cdots$
and the hyperfinite case. The proofs of other statements are
similar. First suppose that the trace is finite. Let $n$ be a
positive integer and fix some positive integers $i_1 > i_2 > \cdots
> i_n$. We define the $\sigma$-algebra $\mathcal{F}_k=\sigma(\varepsilon_1,\ldots,\varepsilon_{k})$, $k=1,\ldots,n$ and $\mathcal{F}_{0}=\emptyset$. We define the family $(N_m)_{m=1}^{2n}$ of von Neumann subalgebras of $L^\infty(\Omega) \ot M$ by
$$
N_{2k-1}=L^\infty(\Omega,\mathcal{F}_{k-1},\mathbb{P})\ot M_{i_{k}}
,\qquad k=1,\ldots,n
$$
$$
N_{2k}= L^\infty(\Omega,\mathcal{F}_{k},\mathbb{P}) \ot
M_{i_{k}},\qquad k=1,\ldots,n.
$$
These subalgebras form an increasing sequence $N_1 \subset N_2
\subset \cdots  \subset N_{2n}$. For an element $y \in
L^p\big(L^\infty(\Omega) \ot M,E\big)$, we define the martingale
$(y_m)_{1 \leq m \leq 2n}$ by
$$
y_m=\big(\mathbb{E}(\cdot|N_{m})\ot Id_E\big)(y),\qquad
m=1,\ldots,2n.
$$
Let $(d_m)_{1 \leq m \leq 2n}$ be the associated martingale
difference sequence. Then, by the $\OUMD_p$-property of $E$, we have
\begin{align*}
  \Bgnorm{\sum_{k=1}^{n} d_{2k}}_{L^p(L^\infty(\Omega) \ot M,E)}
    &= \frac{1}{2} \Bgnorm{\sum_{m=1}^{2n} d_{m}+  \sum_{m=1}^{2n} (-1)^m d_{m}}_{L^p(L^\infty(\Omega) \ot M,E)}\\
    &\leq \frac{1}{2} \Bigg(\bgnorm{\sum_{m=1}^{2n} d_{m}}_{L^p(L^\infty(\Omega) \ot M,E)}+  \bgnorm{\sum_{m=1}^{2n} (-1)^m d_{m}}_{L^p(L^\infty(\Omega) \ot M,E)} \Bigg)\\
    & \lesssim \Bgnorm{\sum_{m=1}^{2n} d_{m}}_{L^p(L^\infty(\Omega) \ot M,E)}
\end{align*}
(here and in the sequel of the proof, we use $\lesssim$ to indicate
an inequality up to a constant). Now fix $x_1,\ldots,x_n \in
L^p(M,E)$ and put $y=\sum_{l=1}^{n} \varepsilon_l \ot x_l$. For this
choice of $y$ and any integer $1 \leq k\leq n$, we have
\begin{align*}
   y_{2k-1}
    & = \big(\mathbb{E}(\cdot|N_{2k-1})\ot Id_E\big)(y)\\
    & = \sum_{l=1}^n \big(\mathbb{E}\big(\cdot |L^\infty(\Omega,\mathcal{F}_{k-1},\mathbb{P}) \ot M_{i_k} \big)\ot Id_E\big)(\varepsilon_l \ot x_l)\\
    & = \sum_{l=1}^n \mathbb{E}\big(\varepsilon_l |L^\infty(\Omega,\mathcal{F}_{k-1},\mathbb{P})\big)\ot \big(\mathbb{E}(\cdot | M_{i_k})\ot Id_E\big)(x_l)\\
    & = \sum_{l=1}^{k-1} \varepsilon_l \ot \big(\mathbb{E}(\cdot| M_{i_k}) \ot Id_E\big)(x_l)
\end{align*}
and similarly for any integer $1 \leq k \leq n$ we have
$$
y_{2k}=\sum_{l=1}^{k} \varepsilon_l \ot
\big(\mathbb{E}(\cdot|M_{i_k})\ot Id_E\big)(x_l).
$$
Therefore, for any $1 \leq k \leq n$, we have
\begin{align*}
d_{2k}
   &=y_{2k}-y_{2k-1}\\
   &=\sum_{l=1}^{k}\varepsilon_l \ot \big(\mathbb{E}(\cdot|M_{i_k})\ot Id_E\big)(x_l)-\sum_{k=1}^{k-1}\varepsilon_l \ot \big(\mathbb{E}(\cdot|M_{i_k})\ot Id_E\big)(x_l)\\
   &=\varepsilon_k \ot \big(\mathbb{E}(\cdot|M_{i_k})\ot Id_E\big)(x_k)
\end{align*}
and similarly $d_{2k-1}=0$ for any integer $1 \leq k \leq n$.
Finally, we obtain that
\begin{align*}
\Bgnorm{\sum_{k=1}^{n} \varepsilon_k \ot
\big(\mathbb{E}(\cdot|M_{i_k})\ot
Id_E\big)(x_k)}_{L^p(L^\infty(\Omega) \ot M,E)}
    & = \Bgnorm{\sum_{k=1}^{n} d_{2k}}_{L^p(L^\infty(\Omega) \ot M,E)}\\
    & \lesssim \Bgnorm{\sum_{m=1}^{2n} d_m}_{L^p(L^\infty(\Omega) \ot M,E)}\\
    & =\norm{y_{2m}}_{L^p(L^\infty(\Omega) \ot M,E)}\\\\
    & = \bnorm{\big(\mathbb{E}(\cdot|N_{2n})\ot Id_E\big)(y)}_{L^p(L^\infty(\Omega) \ot M,E)}\\
    & \lesssim \norm{y}_{L^p(L^\infty(\Omega) \ot M,E)}=\Bgnorm{\sum_{k=1}^{n} \varepsilon_k \ot x_k}_{L^p(L^\infty(\Omega) \ot M,E)}.
\end{align*}
By using Khintchine-Kahane inequalities (e.g. \cite[page 211]{DJT}),
we conclude that
$$
\Bgnorm{\sum_{k=1}^{n} \varepsilon_k \ot
\big(\mathbb{E}(\cdot|M_{i_k}) \ot
Id_E\big)(x_k)}_{\Rad(L^p(M,E))}\lesssim \Bgnorm{\sum_{k=1}^{n}
\varepsilon_k \ot x_k}_{\Rad(L^p(M,E))}.
$$
We deduce the general case of von Neumann algebras equipped with
faithful normal semifinite traces by a straightforward application
of the well-known reduction method of Haagerup \cite{HJX}.
\end{proof}

Let $X$ be a Banach space. A strongly continuous semigroup
$(T_t)_{t\geq 0}$ is called $R$-analytic if there exist
$0<\theta<\frac{\pi}{2}$ and a holomorphic extension
$$
\begin{array}{cccc}
    &  \Sigma_\theta   &  \longrightarrow   &  B(X)  \\
    &  z   &  \longmapsto       &  T_z  \\
\end{array}
$$
with $R\big(\{T(z): z \in \Sigma_{\theta}\}\big) < \infty$. See
\cite{KW} for more information and for applications to maximal
regularity.

The following result is a particular case of \cite[Theorem
6.1]{Fac}.
\begin{thm}
\label{Th Fackler R-analyticity} Let $(T_{1,t})_{t \geq 0}$ and
$(T_{2,t})_{t \geq 0}$ be two consistent semigroups given on an
interpolation couple $(X_1, X_2)$ of $K$-convex Banach spaces.
Suppose $0<\theta<1$. Assume that $(T_{1,t})_{t \ge 0}$ is strongly
continuous and $R$-analytic and that $R\big(\{T_{2,t}: 0 < t <
1\}\big) < \infty$. Then there exists a unique strongly continuous
$R$-analytic semigroup $(T_t)_{t \geq 0}$ on $(X_1,X_2)_\theta$
which is consistent with $(T_{1,t})_{t \geq 0}$ and $(T_{2,t})_{t
\geq 0}$.
\end{thm}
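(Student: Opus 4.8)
The plan is to realise $(T_t)_{t\geq 0}$ as the complex interpolation of the two given semigroups and then to run a Stein-type interpolation argument, the essential subtlety being that one must interpolate \emph{$R$-boundedness} and not merely norm boundedness — this is exactly where the $K$-convexity of $X_1$ and $X_2$ enters. First I would define the candidate semigroup: since $(T_{1,t})$ and $(T_{2,t})$ are consistent, for each fixed $t\geq 0$ the operators $T_{1,t}$ and $T_{2,t}$ agree on $X_1\cap X_2$, so by Calderón's theorem they induce a bounded operator $T_t$ on $(X_1,X_2)_\theta$; the semigroup law and $T_0=\Id$ pass to $(X_1,X_2)_\theta$ because $X_1\cap X_2$ is dense there and these relations already hold on $X_1\cap X_2$. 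As $(T_{1,t})$ is $R$-analytic it is norm bounded on $(0,1)$, and $R(\{T_{2,t}:0<t<1\})<\infty$ gives norm boundedness of the second family on $(0,1)$, so interpolating yields $\sup_{0<t<1}\norm{T_t}_{(X_1,X_2)_\theta\to(X_1,X_2)_\theta}<\infty$. Strong continuity then follows on the dense subspace $X_1\cap X_2$ from $\norm{T_t x-x}_{(X_1,X_2)_\theta}\lesssim \norm{T_t x-x}_{X_1}^{1-\theta}\,\norm{T_t x-x}_{X_2}^{\theta}$, using strong continuity of $(T_{1,t})$ on $X_1$ and boundedness of $(T_{2,t})$ on $X_2$, and then on all of $(X_1,X_2)_\theta$ by uniform boundedness. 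Uniqueness is immediate: two strongly continuous semigroups consistent with the data coincide on the dense subspace $X_1\cap X_2$, hence everywhere.

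Next I would produce the holomorphic extension. Let $\theta_1\in(0,\tfrac{\pi}{2})$ be an angle for which $(T_{1,t})$ extends holomorphically to $\Sigma_{\theta_1}$ with $\mathcal{R}_1:=R(\{T_{1,z}:z\in\Sigma_{\theta_1}\})<\infty$, and set $\theta'=(1-\theta)\theta_1$. Fixing $z_0=r e^{i\psi}\in\Sigma_{\theta'}$, I consider on the closed strip $\overline{S}=\{0\leq \Re w\leq 1\}$ the operator-valued function $w\mapsto T_{z(w)}$, where $z(w)=r\exp\big(i\psi\tfrac{1-w}{1-\theta}\big)$. This conformal change of variable is arranged so that $z(\theta)=z_0$, so that on the edge $\Re w=0$ the argument of $z(w)$ stays in $(-\theta_1,\theta_1)$ — where $T_{z(w)}$ is the holomorphic $X_1$-realisation and is bounded regardless of modulus — and so that on the edge $\Re w=1$ the point $z(w)$ is real positive, where $T_{z(w)}=T_{2,z(w)}$ is the $X_2$-realisation, defined and norm bounded for every positive time because $(T_{2,t})$ is a semigroup. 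The family is holomorphic in $w$ on the open strip with bounded boundary values, so Stein interpolation of analytic families defines a bounded operator $T_{z_0}$ on $(X_1,X_2)_\theta$ agreeing with the semigroup on the positive axis; letting $z_0$ vary and invoking vector-valued holomorphy (Morera) gives a holomorphic extension $z\mapsto T_z$ on $\Sigma_{\theta'}$. Note that the analyticity angle interpolates as $(1-\theta)\theta_1$, the $X_2$-side contributing angle zero.

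The heart of the proof is to upgrade this to $R$-boundedness. I would prove and apply an interpolation theorem for $R$-bounded analytic families: if $(F(w))_{w\in\overline{S}}$ is holomorphic on the open strip with $R$-bounded boundary families on $X_1$ and on $X_2$, then the interpolated family at $w=\theta$ is $R$-bounded on $(X_1,X_2)_\theta$, with bound controlled by $\mathcal{R}_1^{1-\theta}\,\mathcal{R}_2^{\theta}$ where $\mathcal{R}_2=R(\{T_{2,t}:0<t<1\})$. The mechanism is to run the three-lines argument on a $\Rad$-valued analytic function assembled from the family and finitely many vectors; the boundary estimates are precisely the two endpoint $R$-bounds, and the one step that is not formal is that the interpolated function, a priori only valued in $L^2(\Omega,X_\theta)=\big(L^2(\Omega,X_1),L^2(\Omega,X_2)\big)_\theta$, must be projected back onto the Rademacher subspace $\Rad(X_\theta)$. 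This projection is the vector-valued Rademacher projection, bounded exactly because $X_\theta$ is $K$-convex; and $X_\theta$ is $K$-convex since $X_1$ and $X_2$ are and $K$-convexity is preserved under complex interpolation. This is the step I expect to be the main obstacle, and it is the reason the hypothesis demands $K$-convex endpoints. A second, more bookkeeping, difficulty is that the right edge of the strip must be confined to $(0,1)$ where $T_2$ is $R$-bounded, whereas the left edge may range over all of $\Sigma_{\theta_1}$; choosing the change of variables accordingly yields $R(\{T_z:z\in\Sigma_{\theta''},\ |z|\le 1\})<\infty$ on a truncated subsector, and the uniform boundedness from the first paragraph together with the semigroup law propagates this to obtain the desired strongly continuous $R$-analytic semigroup on $(X_1,X_2)_\theta$.
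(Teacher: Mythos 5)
The first thing to note is that the paper does not prove this statement: it is quoted verbatim as a particular case of \cite[Theorem 6.1]{Fac}, and Fackler's proof runs along a different line than yours. He characterizes $R$-analyticity by the asymptotic behaviour of the semigroup \emph{at real times near zero} ($R$-bounded Beurling--Kato type criteria, established using Pisier's results on $K$-convex spaces), and such quantities interpolate because they only involve the operators $T_t$, $t\in(0,1)$, which are consistent on the couple: $K$-convexity makes the Rademacher projection bounded on $L^2(\Omega,X_i)$, hence $\Rad(X_i)$ is complemented compatibly and $\Rad\big((X_1,X_2)_\theta\big)\simeq\big(\Rad(X_1),\Rad(X_2)\big)_\theta$, so the $R$-bound of a \emph{consistent} family passes to the interpolation space. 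You correctly identified this last mechanism, and your first paragraph (construction of $T_t$, strong continuity, uniqueness) is sound. The reason Fackler avoids complex times altogether is that there simply are no operators $T_{2,z}$ on $X_2$ at non-real $z$ to interpolate against --- and this is exactly where your plan breaks.

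Concretely, two gaps. First, with your change of variables $z(w)=r\exp\big(i\psi\tfrac{1-w}{1-\theta}\big)$ the right edge $\Re w=1$ is sent onto the \emph{whole} half-line $(0,\infty)$ (the modulus there is $re^{\psi s/(1-\theta)}$), so the boundary family consists of $T_{2,t}$ for arbitrarily large $t$, for which the hypothesis gives nothing beyond exponential growth from the semigroup law; your claim of ``bounded boundary values'' is false. For the scalar three-lines estimate this is repairable by an admissible-growth (Hirschman) version, but for the $R$-bound it is fatal, since the penalty for leaving $(0,1)$ is not independent of the number $n$ of points. Second, and this is the crux you wave at with ``choosing the change of variables accordingly'': to get an $n$-independent estimate you need, for every finite configuration $z_1,\dots,z_n$ in a fixed truncated subsector, analytic paths $z_k(\cdot)$ on the strip with $z_k(\theta)=z_k$, left edge inside the region where $\{T_{1,z}\}$ is $R$-bounded and right edge inside $(0,1)$. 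Writing $z_k(w)=e^{-\zeta_k(w)}$, this demands $|\Im\zeta_k|<\theta_1$ on the whole strip while $\zeta_k$ is real and positive on the right edge and $\zeta_k(\theta)=-\log z_k$ sweeps a full truncated sector; this is a genuine Phragm\'en--Lindel\"of type obstruction (linear or $\cosh/\sinh$ candidates either violate the imaginary-part bound or only reach arguments that pinch as $|z_k|\to1$), and no construction is given. Finally, the closing ``propagation'' step cannot work: uniform boundedness is known only on $(0,1)$, and in fact the \emph{global} conclusion is false under the stated hypotheses --- for the heat semigroup on the couple $\big(L^2(\mathbb{R},dx),\,L^2(\mathbb{R},e^{2x}dx)\big)$ (both Hilbert, hence $K$-convex, and all hypotheses hold) the interpolated semigroup on $L^2(\mathbb{R},e^{2\theta x}dx)$ has norm $e^{\theta^2 t}$, so it is $R$-bounded on no sector. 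The theorem is therefore only correct for the local notion of $R$-analyticity ($R$-boundedness on $\Sigma_{\theta''}\cap\{|z|\leq 1\}$) used in \cite{Fac}, and any attempt, like yours, to reach the global bound of the paper's definition of $R$-analyticity must fail at this point.
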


Now, we prove Theorem \ref{Th R-Analyticity general semigroups}.

\begin{proof}[of Theorem \ref{Th R-Analyticity general semigroups}]
We only prove the hyper-factorizable case. The QWEP-factorizable
case is similar. We can identify $OH(I)$ with $\ell^2_I$ completely
isometrically. Fubini's theorem gives us an isometric isomorphism
$$
L^2\big(M,OH(I)\big)=\ell^2_I\big(L^2(M)\big).
$$
Hence the Banach space $L^2\big(M,OH(I)\big)$ is isometric to a
Hilbert space. On a Hilbert space, recall that any bounded set is
$R$-bounded. By Theorem \ref{Th Analyticity general semigroups}, we
deduce that $(T_t \ot Id_{OH(I)})_{t\geq 0}$ defines a $R$-analytic
semigroup on $L^p\big(M,OH(I)\big)$.

Let $t_1,\ldots, t_n$ be rational numbers. We take a common
denominator: we can write $t_i=\frac{s_i}{d}$ for some integers
$d,s_1,\ldots,s_n$. The operator $T_{\frac{1}{d}}$ admits a Rota
dilation:
$$
T_{\frac{1}{d}}^k=Q\mathbb{E}_k\pi,\qquad k \geq 1.
$$
Let $x_1,\ldots,x_n \in F$. Using Proposition \ref{Prop
noncommutative Stein on conditional expectations} and the fact that
any singleton is $R$-bounded, we obtain
\begin{align*}
\Bgnorm{\sum_{i=1}^{n}\varepsilon_i \ot (T_{t_i} \ot
Id_{F})(x_i)}_{\Rad(L^p(M,F))}
    &= \Bgnorm{\sum_{i=1}^{n}\varepsilon_i \ot (Q \mathbb{E}_{s_i}\pi\ot Id_{F})(x_i)}_{\Rad(L^p(M,F))}\\
    &\leq \Bgnorm{\sum_{i=1}^{n}\varepsilon_i \ot (\mathbb{E}_{s_i}\pi \ot Id_{F})(x_i)}_{\Rad(L^p(N,F))}\\
    &\leq R\big(\big\{\mathbb{E}_{k}\ot Id_{F}\ \colon \ k\geq 1\big\}\big) \Bgnorm{\sum_{i=1}^{n} \varepsilon_i \ot (\pi \ot Id_{F})x_i}_{\Rad(L^p(M,F))}\\
    &\leq R\big(\big\{\mathbb{E}_{k}\ot Id_{F}\ \colon \ k\geq 1\big\}\big) \Bgnorm{\sum_{i=1}^{n} \varepsilon_i \ot x_i}_{\Rad(L^p(M,F))}.
\end{align*}
By using the strong continuity of the semigroup, We conclude that
$\{T_t \ot Id_{F}\ :\ t \geq 0\}$ is a $R$-bounded subset of
$B\big(L^p(M,F)\big)$.

Now, we have
$$
L^p(M,E) = \Big( L^2\big(M,OH(I)\big),L^{q}(M,F)\Big)_{\alpha}.
$$
Furthermore, the operator space $F$ is $\OUMD_p$. We deduce that the
Banach space $S^p(F)$ is $\UMD$, hence K-convex. Thus the operator
space $F$ is $\OK$-convex. By Proposition \ref{Prop K-convexity of
Lp(M,E)}, we deduce that $L^{q}(M,F)$ is $\K$-convex. Moreover the
Banach space $L^2\big(M,OH(I)\big)$ is obviously $\K$-convex. We
conclude by using Theorem \ref{Th Fackler R-analyticity}.
\end{proof}

By example, we can take $E=OH(I)$ in Theorem \ref{Th R-Analyticity
general semigroups}.
\section{Applications to functional calculus}

We start with a little background on sectoriality and $H^\infty$ functional calculus. We refer to \cite{Haa}, \cite{KW}, \cite{JMX} and \cite{LM} for details and complements. A closed, densely defined linear operator $A \co D(A)\subset X \to X$ is called sectorial of type $\omega$ if its spectrum $\sigma(A)$ is included in the closed sector $\overline{\Sigma_\omega}$, and for any angle $\omega<\theta < \pi$, there is a positive constant $K_\theta$ such that
\begin{equation*}\label{Sector}
 \bnorm{(\lambda-A)^{-1}}_{X\to X}\leq
\frac{K_\theta}{\vert  \lambda \vert},\qquad \lambda
\in\mathbb{C}-\overline{\Sigma_\theta}.
\end{equation*}
We recall that sectorial operators of type $<\frac{\pi}{2}$ coincide
with negative generators of bounded analytic semigroups.

For any $0<\theta<\pi$, let $H^{\infty}(\Sigma_\theta)$ be the
algebra of all bounded analytic functions $f\colon \Sigma_\theta\to
\C$, equipped with the supremum norm $\norm{f}_{H^{\infty}(\Sigma_\theta)}=\,\sup\bigl\{\vert f(z)\vert \, :\, z\in \Sigma_\theta\bigr\}.$ Let
$H^{\infty}_{0}(\Sigma_\theta)\subset H^{\infty}(\Sigma_\theta)$ be
the subalgebra of bounded analytic functions $f\colon
\Sigma_\theta\to \C$ for which there exist $s,c>0$ such that $\vert
f(z)\vert\leq c \vert z \vert^s\bigl(1+\vert z \vert)^{-2s}\,$ for
any $z\in \Sigma_\theta$. Given a sectorial operator $A$ of type $0<
\omega < \pi$, a bigger angle $\omega<\theta<\pi$, and a function
$f\in H^{\infty}_{0}(\Sigma_\theta)$, one may define a bounded
operator $f(A)$ by means of a Cauchy integral (see e.g.
\cite[Section 2.3]{Haa} or \cite[Section 9]{KW}); the resulting
mapping $H^{\infty}_{0}(\Sigma_\theta)\to B(X)$ taking $f$ to $f(A)$
is an algebra homomorphism. By definition, $A$ has a bounded
$H^{\infty}(\Sigma_\theta)$ functional calculus provided that this
homomorphism is bounded, that is, there exists a positive constant
$C$ such that $\bnorm{f(A)}_{X \to X}\leq C\norm{f}_{H^{\infty}(\Sigma_\theta)}$ for any $f\in H^{\infty}_{0}(\Sigma_\theta)$. In the case when $A$ has a dense range, the latter boundedness condition allows a natural extension of $f\mapsto f(A)$ to the full algebra $H^{\infty}(\Sigma_\theta)$.

Suppose $1<p<\infty$. In the sequel, we say that an operator $A$ on
a vector valued noncommutative $L^p$-space $L^p(M,E)$ admits a
completely bounded $H^{\infty}(\Sigma_\theta)$ functional calculus
if $Id_{S^p} \ot A$ admits a bounded $H^{\infty}(\Sigma_\theta)$
functional calculus on the Banach space $S^p\big(L^p(M,E)\big)$.


We will use the following theorem \cite[Corollary 10.9]{KW}:

\begin{thm}
\label{Th Calcul fonct et dilatations} Let $(T_t)_{t \geq 0}$ be a
strongly continuous semigroup with generator $-A$ on a Banach space.
If the semigroup has a dilation to a bounded strongly continuous
group on a $\UMD$ Banach space, then $A$ has a bounded $H^{\infty}(\Sigma_\theta)$ functional calculus for some $0<\theta<\pi$.
\end{thm}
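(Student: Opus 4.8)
The plan is to reduce the functional calculus of $A$ to a Fourier multiplier problem for the dilating group, and then to invoke the $UMD$ property. Unwinding the dilation, we are given a $UMD$ Banach space $Y$, a bounded strongly continuous group $(U_t)_{t\in\R}$ on $Y$ with generator $-B$, and bounded operators $J\colon X\to Y$ and $Q\colon Y\to X$ with $QJ=\Id_X$ and $T_t=QU_tJ$ for every $t\geq 0$. First I would record the resolvent form of this identity: integrating $e^{-\mu t}T_t=Q(e^{-\mu t}U_t)J$ over $(0,+\infty)$ for $\mathrm{Re}\,\mu>0$ gives $(\mu+A)^{-1}=Q(\mu+B)^{-1}J$, equivalently $(\lambda-A)^{-1}=Q(\lambda-B)^{-1}J$ for $\mathrm{Re}\,\lambda<0$. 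Since $(U_t)$ is bounded in both directions of time, $\sigma(B)\subset i\R$; hence $A$ is sectorial of type $\pi/2$ and the whole issue is to upgrade this to a bounded $H^\infty(\Sigma_\theta)$-calculus for some $\pi/2<\theta<\pi$.

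The core of the argument, and what I expect to be the main obstacle, is the multiplier estimate for the group: \emph{if $-B$ generates a bounded strongly continuous group on a $UMD$ space, then every symbol $m$ which is holomorphic and bounded on a conical neighbourhood of $\sigma(B)=i\R$ (equivalently, satisfies a Mikhlin-type condition on $i\R$) defines a bounded operator $m(B)$ on $Y$ with $\norm{m(B)}\lesssim\norm{m}_{Mikhlin}$}. I would prove this by the Coifman--Weiss transference principle: writing $B=-iK$ with $K$ of real spectrum so that $U_s=e^{isK}$, one represents $m(B)$ as $\frac{1}{2\pi}\int_\R \check m(s)\,U_s\,ds$, and transference bounds its norm on $Y$ by the Fourier multiplier norm of $m$ on the Bochner space $L^p(\R,Y)$. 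Because $Y$ is $UMD$, the vector valued Mikhlin--H\"ormander theorem controls this multiplier norm by the Mikhlin seminorm of $m$. This is exactly the step where the $UMD$ hypothesis is indispensable.

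It remains to feed a sectorial $f$ into this machine. Fix $f\in H^\infty_0(\Sigma_\theta)$ with $\pi/2<\theta<\pi$ and set $m=f|_{i\R}$. The hypothesis $\theta>\pi/2$ ensures that $\Sigma_\theta$ contains a conical neighbourhood of $i\R\setminus\{0\}$, so Cauchy's estimates give $\norm{m}_{Mikhlin}\lesssim (\sin(\theta-\tfrac{\pi}{2}))^{-1}\norm{f}_{H^\infty(\Sigma_\theta)}$. Inserting $(\lambda-A)^{-1}=Q(\lambda-B)^{-1}J$ into the Cauchy integral defining $f(A)$ and using the consistency of the two functional calculi yields $f(A)=Q\,m(B)\,J$, whence $\norm{f(A)}\leq\norm{Q}\,\norm{J}\,\norm{m(B)}\lesssim\norm{f}_{H^\infty(\Sigma_\theta)}$ uniformly over $H^\infty_0(\Sigma_\theta)$. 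The extension to the full algebra $H^\infty(\Sigma_\theta)$ is then the standard McIntosh approximation argument, after reducing to an injective operator with dense range (replace $A$ by $A+\varepsilon$ and let $\varepsilon\to 0$). Finally, the angle can only be taken in $(\pi/2,\pi)$: the Cauchy estimates above degenerate as $\theta\downarrow\pi/2$, reflecting that the dilation yields boundedness of the group but no analyticity of the semigroup.
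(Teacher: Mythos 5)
Your argument is correct and is essentially \emph{the} proof of this statement: the paper does not prove it at all, but simply quotes it as \cite[Corollary 10.9]{KW}, and the proof behind that citation is precisely your scheme --- the resolvent identity $(\lambda-A)^{-1}=Q(\lambda-B)^{-1}J$ obtained by unwinding the dilation, the Hieber--Pr\"uss-type theorem that generators of bounded $C_0$-groups on $UMD$ spaces admit a bounded $H^\infty$ calculus of angle $>\frac{\pi}{2}$ (proved by Coifman--Weiss transference together with the vector-valued Mikhlin multiplier theorem), and the transfer $f(A)=Q\,f(B)\,J$ through the Cauchy integral. The only point your sketch elides is the justification of the Phillips-type representation $m(B)=\frac{1}{2\pi}\int_{\R}\check m(s)\,U_s\,ds$ for $f\in H^\infty_0(\Sigma_\theta)$ --- one must check that $\check m$ is integrable (or run a standard regularization/approximation argument) before transference applies --- which is a technical rather than conceptual gap.
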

We also need the particular case of \cite[Proposition 5.1]{KaW}
which says that the presence of $R$-analyticity allows to reduce the
angle of a bounded $H^{\infty}(\Sigma_\theta)$ functional calculus
below $\frac{\pi}{2}$.

\begin{prop}
\label{Prop KW reduce angle} Let $(T_t)_{t \geq 0}$ be a strongly
continuous semigroup with generator $-A$ on a Banach space. Suppose
that $A$ has a bounded $H^{\infty}(\Sigma_{\theta_0})$ functional
calculus for some $0<\theta_0<\pi$. If the semigroup $(T_t)_{t \geq
0}$ is $R$-analytic then $A$ has indeed a bounded $H^{\infty}(\Sigma_\theta)$ functional calculus for some $0<\theta<\frac{\pi}{2}$.
\end{prop}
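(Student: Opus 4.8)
The plan is to pass to the language of the angles canonically attached to a sectorial operator and then to quote the exact interplay between $R$-boundedness and the $H^\infty$ calculus. Recall that a bounded $H^\infty(\Sigma_{\theta_0})$ calculus presupposes that $A$ is sectorial of type at most $\theta_0$; write $\omega(A)$ for its sectoriality angle, $\omega_R(A)$ for its $R$-sectoriality angle, i.e. the infimum of the $\theta\in(0,\pi)$ for which $\big\{\lambda(\lambda-A)^{-1}:\lambda\notin\overline{\Sigma_\theta}\big\}$ is $R$-bounded, and $\omega_{H^\infty}(A)$ for its $H^\infty$-angle, the infimum of the $\theta$ for which $A$ admits a bounded $H^\infty(\Sigma_\theta)$ calculus. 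The hypothesis says precisely that $\omega_{H^\infty}(A)\leq\theta_0<\pi$, so $A$ is sectorial and has a bounded $H^\infty$ calculus at some (possibly large, i.e. $\geq\frac{\pi}{2}$) angle. The whole point is to produce an admissible angle strictly below $\frac{\pi}{2}$.

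First I would convert the $R$-analyticity hypothesis into an upper bound on $\omega_R(A)$. By definition the semigroup extends $R$-boundedly to a sector $\Sigma_\delta$ for some $\delta\in(0,\frac{\pi}{2})$; writing the resolvent as the Laplace transform $(\lambda+A)^{-1}=\int_0^\infty e^{-\lambda t}T_t\,dt$ and deforming the integration ray into $\Sigma_\delta$, the family $\{T_z:z\in\Sigma_\delta\}$ being $R$-bounded lets one realise each rescaled resolvent $\lambda(\lambda-A)^{-1}$ (for $\lambda$ off a sector of half-opening $\frac{\pi}{2}-\delta$) as an $R$-bounded average of that family, via Kahane's contraction principle. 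This is the standard equivalence between $R$-analyticity and $R$-sectoriality of angle below $\frac{\pi}{2}$ (see \cite{KW}); it yields the strict inequality $\omega_R(A)\leq\frac{\pi}{2}-\delta<\frac{\pi}{2}$.

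The crux --- and the step I expect to be the main obstacle --- is the angle-reduction principle: if $A$ is $R$-sectorial and admits a bounded $H^\infty$ calculus at some angle, then it admits one at every angle exceeding $\omega_R(A)$, so that $\omega_{H^\infty}(A)\leq\omega_R(A)$. Granting this, the two previous paragraphs give $\omega_{H^\infty}(A)\leq\omega_R(A)<\frac{\pi}{2}$, and any $\theta$ with $\omega_{H^\infty}(A)<\theta<\frac{\pi}{2}$ is the desired angle. To prove the principle for $\omega_R(A)<\theta<\theta_0$ and $f\in H^\infty_0(\Sigma_\theta)$, one represents $f(A)$ by the Cauchy integral $\frac{1}{2\pi i}\int_{\partial\Sigma_\theta}f(\lambda)(\lambda-A)^{-1}\,d\lambda$ and seeks a bound by $C\norm{f}_{H^\infty(\Sigma_\theta)}$ uniform in $f$. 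Here the delicate matter is that the $R$-boundedness of the rescaled resolvents along $\partial\Sigma_\theta$ (available because $\theta>\omega_R(A)$) only controls the absolutely convergent part of the integral and by itself merely reproves sectoriality; the genuinely non-absolutely-convergent contribution of a bounded, non-decaying $f$ must be absorbed using the already available bounded $H^\infty(\Sigma_{\theta_0})$ calculus at the larger angle. Executing this --- splitting $f$ across dyadic annuli, feeding the resolvent $R$-bound and the large-angle calculus into an operator-valued multiplier estimate, and recombining via the contraction principle --- is exactly the content of \cite[Proposition 5.1]{KaW}, which I would invoke rather than reconstruct. A final density argument of $H^\infty_0(\Sigma_\theta)$ in $H^\infty(\Sigma_\theta)$ promotes the estimate to the full algebra.
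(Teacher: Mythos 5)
Your proposal is correct and follows essentially the same route as the paper, which states this proposition precisely as the particular case of \cite[Proposition 5.1]{KaW} and offers no independent proof. Your additional spelling-out of the standard translation from $R$-analyticity of the semigroup to $R$-sectoriality of $A$ with $\omega_R(A)<\frac{\pi}{2}$ is exactly the (implicit) reduction needed to invoke that result, so nothing is missing.
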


Now, we give the proof of our main result.

\begin{proof}[of Theorem \ref{Th calcul fonctionnel complètement borné}]
By definition, there exists a dilation of the semigroup $(T_t)_{t\geq
0}$. This dilation induces a dilation of the semigroup $(T_t \ot
Id_{E})_{t \geq 0}$ on $L^p(M,E)$ to a strongly continuous group of
isometries on $L^p(N,E)$.

Since the operator Hilbert space $OH(I)$ has $\OUMD_2$ and the operator space $F$ have $\OUMD_q$, we see by interpolation that the operator space $E$ has $\OUMD_p$. We infer that the Banach space $S^p(E)$ is $\UMD$. Hence the Banach space $S^p_I(E)$ is also $\UMD$ for any index set $I$. Since $N$ has QWEP, the Banach space $L^p(N,E)$ is a closed subspace of an ultrapower $S_I^p(E)^\ul$ of a vector valued Schatten space $S_I^p(E)$ for some index set $I$. We deduce that the Banach space $L^p(N,E)$ is $\UMD$.

By Theorem \ref{Th Calcul fonct et dilatations}, we deduce that
$A_p$ has a bounded $H^{\infty}(\Sigma_\theta)$ functional calculus
for some $0<\theta<\pi$. By Theorem \ref{Th R-Analyticity general
semigroups}, the semigroup $(T_t\ot Id_{E})_{t \geq 0}$ is $R$-analytic on $L^p(M,E)$. By Proposition \ref{Prop KW reduce angle}, these two results imply that $A_p$ actually admits a bounded
$H^{\infty}(\Sigma_\theta)$ functional calculus for some $0< \theta <\frac{\pi}{2}$, as expected.

Finally, it is not hard to check that the above arguments work as
well with $(Id_{B(\ell^2)} \ot T_t)_{t\geq 0}$ in the place of
$(T_t)_{t\geq 0}$. thus $A_p$ actually has a completely bounded
functional calculus for some $0<\theta<\frac{\pi}{2}$.
\end{proof}

Now, we give some natural examples
to which the results of this paper can be applied.
\paragraph{Noncommutative Poisson semigroup.}

Let $n\geq 1$ be an integer. Here $\mathbb{F}_n$ denotes a free
group with $n$ generators denoted by $g_1,\ldots,g_n$. Any $g\in
\mathbb{F}_n$ has a unique decomposition of the form
$$
g=g_{i_1}^{k_1}g_{i_2}^{k_2}\cdots g_{i_l}^{k_l},
$$
where $l \geq 0$ is an integer, each $i_j$ belongs to
$\{1,\ldots,n\}$, each $k_j$ is a non zero integer, and $i_j \not=
i_{j+1}$ if $1\leq j \leq l-1$. The case when $l=0$ corresponds to
the unit element $g=e_{\mathbb{F}_n}$. By definition, the length of
$g$ is defined as
$$
|g|=|k_1|+\cdots+|k_l|.
$$
This is the number of factors in the above decomposition of $g$. For
any nonnegative real number $t \geq 0$, we have a normal unital
completely positive selfadjoint map
$$
\begin{array}{cccc}
   T_t :&   \VN(\mathbb{F}_n)  &  \longrightarrow   &  \VN(\mathbb{F}_n)  \\
    &   \lambda_g  &  \longmapsto       &  e^{-t|g|}\lambda_g.  \\
\end{array}
$$
These maps define a w*-semigroup $(T_t)_{t \geq 0}$ called the
noncommutative Poisson semigroup (see \cite{JMX} for more
information). In \cite[remark following Proposition 5.5]{Arh}, it is
implicitly said that  $(T_t)_{t \geq 0}$ is $\QWEP$-dilatable.
Moreover, using \cite{Ric} and \cite[Proposition 4.8]{Arh}, we can
show each $T_t$ is $\QWEP$-factorizable.

\paragraph{$q$-Ornstein-Uhlenbeck semigroup.}

We use the notations of \cite{BKS}. Suppose $-1 \leq q <1$. Let $H$
be a real Hilbert space and let $(a_t)_{t \geq 0}$ be a strongly
continuous semigroup of contractions on $H$. For any $t\geq 0$, let
$T_t=\Gamma_q(a_t)$. Then $(T_t)_{\geq 0}$ is a w*-semigroup of
normal unital completely positive maps preserving the trace on the
von Neumann algebra $\Gamma_q(H)$. 

In the case where $a_t=e^{-t}I_H$, the semigroup $(T_t)_{\geq 0}$ is
the so-called $q$-Ornstein-Uhlenbeck semigroup.

Using \cite{Ric}, \cite{Nou} and the result \cite[Theorem 10.11]{KW}
of dilation of strongly continuous semigroups of contractions on a
Hilbert space it is not hard to see that we obtain examples of
$QWEP$-dilatable semigroups of $\QWEP$-factorizable maps.

We pass to Schur multipliers.
\begin{thm}\label{Th calcul fonctionnel borné}
Let $(T_t)_{t\geq 0}$ be a $w^*$-semigroup self-adjoint contractive
Schur multipliers on $B\big(\ell^2_I\big)$. Suppose $1< p,q <\infty$
and $0<\alpha<1$. Let $E$ be an operator space such that
$E=\big(OH(I),F\big)_\alpha$ for some index set $I$ and for some
$\OUMD_{q}$-operator space $F$ such that
$\frac{1}{p}=\frac{1-\alpha}{2}+\frac{\alpha}{q}$. We let $-A_p$ be
the generator of the strongly continuous semigroup $(T_t\ot
Id_{E})_{t\geq 0}$ on $S^p_I(E)$. Then for some
$0<\theta<\frac{\pi}{2}$, the operator $A_p$ has a bounded
$H^{\infty}(\Sigma_\theta)$ functional calculus.
\end{thm}

\begin{proof}
Arguing as in the proof of \cite[Corollary 4.3]{Arh}, we may reduce
the general case to the unital and completely positive case. The
proof for semigroups of unital completely positive Schur
multipliers, using \cite[Proposition 5.5]{Arh} and \cite{Arh2}, is
similar to the one of Theorem \ref{Th calcul fonctionnel
complètement borné}.
\end{proof}

\begin{remark}
The results of this paper lead to properties of some square
functions, see \cite[section 7]{LM} for more information.
\end{remark}


\textbf{Acknowledgment}. The author would like to express his
gratitude to Christian Le Merdy for some useful advices. He also
thank Hun Hee Lee and Quanhua Xu for some valuable discussions on
Proposition \ref{prop-tensorisation of CP maps} and \'Eric Ricard
for an interesting conversation.

\small

\footnotesize{ \n Laboratoire de Math\'ematiques, Universit\'e de
Franche-Comt\'e,
25030 Besan\c{c}on Cedex,  France\\
cedric.arhancet@univ-fcomte.fr\hskip.3cm
\end{document}